\documentclass[12pt, oneside]{amsart}
\usepackage[all]{xy}
\usepackage{amsfonts}
\usepackage{amsmath,amssymb, amsthm,amsxtra}
\usepackage{amscd} 
\usepackage{verbatim}  
\usepackage{xspace}
\usepackage{url}
\usepackage{graphicx}
\usepackage{color}

\setlength{\topmargin}{-.5cm}
\setlength{\oddsidemargin}{0cm}
\setlength{\evensidemargin}{0cm}
\setlength{\textheight}{24.5cm}
\setlength{\textwidth}{16cm}
\newcommand{\rk} {\operatorname{rk}}

\newcommand{\coeff} {\operatorname{coeff}}
\newcommand{\const} {\operatorname{const}}
\newcommand{\sgn} {\operatorname{sgn}}
\newcommand{\ch} {\operatorname{ch}}

\newcommand{\grassmann}{{\mathbb G_X({d}, \mathcal E)}}

\newcommand{\Gr}{{G}}
\newcommand{\X}[1]{{\mathbb F_X^{#1}(\mathcal E)}}
\newcommand{\G}[1]{{\mathbb F_{\Gr}^{#1}(\mathcal Q)}}

\newcommand{\HH}[1]{{ A^*{(#1)}}} 

\newcommand{\pr}[1]{{ \{ {#1} \}!  }} %

\newcommand{\ronbuntitle}{%
Degree Formulae for Grassmann Bundles, II%
}

\title[\ronbuntitle]{\ronbuntitle}

\author{Hajime KAJI$^*$ and Tomohide TERASOMA$^{**}$}

\address{%
Department of Mathematics, 
School of Science and Engineering, 
Waseda University 
\newline 
\indent 
3-4-1 Ohkubo, Shinjuku, Tokyo 169--8555, JAPAN }
\email{kaji@waseda.jp}

\address{%
Department of Mathematical Science,
University of Tokyo
\newline 
\indent 
3-8-1 Komaba, Meguro, Tokyo 153-8914, JAPAN}
\email{terasoma@ms.u-tokyo.ac.jp}

\thanks{
\noindent $^*$
Department of Mathematics, 
School of Science and Engineering, 
Waseda University.  
\newline 
\indent 
3-4-1 Ohkubo, Shinjuku, Tokyo 169--8555, JAPAN. 
\newline 
\indent 
{\it E-mail address:} {\tt kaji@waseda.jp}.
\\
\indent 
$^{**}$
Department of Mathematical Science,
University of Tokyo. 
\newline 
\indent 
3-8-1 Komaba, Meguro, Tokyo 153-8914, JAPAN. 
\newline 
\indent 
{\it E-mail address:} {\tt terasoma@ms.u-tokyo.ac.jp}%
} 

\subjclass[2010]{%
Primary:
14M15; 
Secondary: 
14C17. 
}
\keywords{%
}

\theoremstyle{plain}
	\newtheorem{theorem}{Theorem}[section]
	\newtheorem*{theorem*}{Theorem}
	\newtheorem{corollary}[theorem]{Corollary}
	\newtheorem{proposition}[theorem]{Proposition}
	\newtheorem{lemma}[theorem]{Lemma}

\theoremstyle{definition}
	\newtheorem{definition}[theorem]{Definition}

\theoremstyle{remark}
	\newtheorem{remark}[theorem]{Remark}

\numberwithin{equation}{section}


\begin{document}

\begin{abstract} 
Let 
$X$ be a non-singular quasi-projective variety over a field,  
and let $\mathcal E$ 
be a vector bundle over $X$. 
Let $\grassmann$ be 
the Grassmann bundle of $\mathcal E$ over $X$ 
parametrizing corank $d$ subbundles of $\mathcal E$
with projection $\pi : \grassmann \to X$, 
and let 
$ \mathcal Q \gets \pi^*\mathcal E$ 
be the universal quotient bundle of rank $d$. 
In this article, a closed formula for 
$\pi_{*}\ch (\det \mathcal Q)$, 
the push-forward of 
the Chern character of 
the Pl\"ucker line bundle 
$\det \mathcal Q$ by $\pi$ is 
given in terms of the Segre classes of $\mathcal E$. 
Our formula yields a degree formula for 
 $\grassmann$ 
with respect to $\det \mathcal Q$ 
when $X$ is projective and $\wedge ^d \mathcal E$ is very ample. 
To prove the formula above, 
a push-forward formula 
in the Chow rings 
from a partial flag bundle of $\mathcal E$ to $X$ 
is 
given. 
\end{abstract}

\setcounter{section}{-1}
\maketitle
\section{Introduction}

Let $X$ be a non-singular quasi-projective variety of dimension $n$ 
defined over a field of arbitrary characteristic, 
and let $\mathcal E$ be a vector bundle of rank $r$ over $X$. 
Let $\grassmann$ be 
the Grassmann bundle of $\mathcal E$ over $X$ 
parametrizing corank $d$ subbundles of $\mathcal E$ 
with projection $\pi : \grassmann \to X$,  
and 
let $\mathcal Q \gets \pi^*\mathcal E$ 
be the universal quotient bundle of rank ${d}$ on $\grassmann$.
We denote by $\theta$ 
the first Chern class 
$c_1(\det\mathcal Q)= c_1(\mathcal Q)$
of $\mathcal Q$, and call $\theta$ the 
 {\it Pl\"ucker class} of $\grassmann$: 
 In fact, the determinant bundle $\det \mathcal Q$ is 
isomorphic to the pull-back of the tautological line bundle 
$\mathcal O_{\mathbb P_X(\wedge^{d} \mathcal E)}(1)$ 
of $\mathbb P_X(\wedge^{d} \mathcal E)$ by the relative
Pl\"ucker embedding over $X$.

The purpose of this article is to study 
the push-forward  of 
powers of the Pl\"ucker class  to $X$ by $\pi$, 
namely, $\pi_*(\theta^N)$, 
where $\pi_{*} : A^{*+d( r - d )}(\grassmann)\to A^{*}(X)$ is the push-forward by $\pi$ between 
the Chow rings. 
The main result is 
a closed formula for the push-forward of 
$\ch (\det\mathcal Q) :=\exp \theta = \sum_{N\ge 0} \frac{1}{N!}\theta^{N}$, 
the Chern character of $\det \mathcal Q$ 
in terms of the Segre classes of $\mathcal E$, as follows:

\begin{theorem}\label{theorem:main_theorem}
We have 
\begin{equation*} 
\pi_* \ch (\det \mathcal Q)
=
\sum_{k}  
\frac{  \prod_{0 \le i < j \le d-1}(k_i - k_j -i + j)}
{\prod_{0 \le i \le d-1} (r  + k_i   -i )!}
\prod_{0 \le i \le d-1}
s_{k_i}(\mathcal E) 
\end{equation*}
in 
$A^{*}(X)\otimes \mathbb Q$, 
where 
$k = (k_0, \dots , k_{d-1}) \in \mathbb Z_{\ge 0}^d$, 
and $s_i(\mathcal E)$ is the $i$-th Segre class of $\mathcal E$. 
\end{theorem}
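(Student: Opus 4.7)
The plan is to pull $\ch(\det\mathcal Q)$ back to a partial flag bundle where $\mathcal Q$ splits completely into line bundles, apply the push-forward formula from that flag bundle to $X$ (the auxiliary result announced in the abstract), and then identify the resulting coefficients by a Jacobi-type determinantal identity.

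Let $p\colon \X{d}\to\grassmann$ be the complete flag bundle of $\mathcal Q$ over the Grassmann bundle, write $L_0,\dots,L_{d-1}$ for the tautological line-bundle subquotients with $\xi_i := c_1(L_i)$, and set $\sigma := \pi\circ p$. Then $p^*\theta = \sum_i\xi_i$ and
\[
p^*\ch(\det\mathcal Q) \;=\; \prod_{i=0}^{d-1} e^{\xi_i}.
\]
The fibres of $p$ are complete flag varieties of dimension $\binom{d}{2}$, so the Vandermonde $\Delta(\xi) := \prod_{0\le i<j\le d-1}(\xi_i-\xi_j)$ is (up to sign fixed by ordering) $d!$ times the relative point class of $p$. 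Together with the projection formula this lets one replace $\pi_*\ch(\det\mathcal Q)$ by a scalar multiple of $\sigma_*\!\bigl(\Delta(\xi)\prod_i e^{\xi_i}\bigr)$, converting a push-forward from the Grassmann bundle into one from the partial flag bundle.

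Expanding $\Delta(\xi) = \sum_{w\in S_d}\sgn(w)\prod_i\xi_i^{d-1-w(i)}$ and each $e^{\xi_i}$ as a power series, the problem reduces to evaluating, for each multi-index $(a_0,\dots,a_{d-1})\in\mathbb Z_{\ge 0}^d$, the push-forward $\sigma_*\!\bigl(\prod_i\xi_i^{a_i}\bigr)$. This is exactly what the partial flag bundle push-forward formula mentioned in the abstract computes: it is obtained by iterating the projective bundle identity $\rho_*(\xi^a) = s_{a-r+1}(\mathcal E)$ up the tower of projective bundles $\X{d}\to\X{d-1}\to\cdots\to\mathbb P_X(\mathcal E)\to X$, combined with the Chern-class relations at each stage, and expresses $\sigma_*\!\bigl(\prod_i\xi_i^{a_i}\bigr)$ as a combinatorial sum of products of Segre classes of $\mathcal E$ whose coefficients depend only on $r$ and the $a_i$'s.

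Finally, I would collect the coefficient of each monomial $\prod_i s_{k_i}(\mathcal E)$ in the resulting double sum. The antisymmetrisation over $S_d$ from the Vandermonde, combined with the factorials $1/k_i!$ from the exponential series and the rank-$r$ shifts produced by the Segre indices in the iterated push-forward, assembles into a Vandermonde-over-factorials expression; by the classical Jacobi-type determinantal identity
\[
\det\!\left(\frac{1}{(\ell_i-(d-1-j))!}\right)_{\!0\le i,j\le d-1} \;=\; \frac{\prod_{i<j}(\ell_i-\ell_j)}{\prod_i\ell_i!}\qquad(\ell_i := r+k_i-i),
\]
this collapses to the Weyl-dimension-type ratio stated in the theorem. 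The main obstacle is precisely this final combinatorial collapse: keeping track of the signs from $\sgn(w)$, the factorials from the exponentials, and the rank shifts coming from the Segre indices, and confirming that they conspire to give exactly the product $\prod_{i<j}(k_i-k_j-i+j)\big/\prod_i(r+k_i-i)!$. The other steps --- the projection formula for $p$ and the iterated projective bundle push-forward --- are routine once the setup is in place.
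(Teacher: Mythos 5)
Your overall route is in substance the paper's own proof: the flag bundle you pull back to is the paper's $\G{d-1}\simeq\X{d}$, the push-forward of monomials $\sigma_*\bigl(\prod_i\xi_i^{a_i}\bigr)$ via the tower of projective bundles is exactly Lemma \ref{lemma:monomial} (packaged as Theorem \ref{theorem:general_push_forward_formula}), and your factorial determinant is the paper's Lemma \ref{lemma:det}; the paper merely organizes the coefficient extraction through the linear form $\Phi$ and the Evaluation Formula (Proposition \ref{prop:evaluation_formula}) instead of collecting monomials by hand. Two remarks on your intermediate steps. First, replacing the paper's exact relative point class $\xi_0^{d-1}\xi_1^{d-2}\cdots\xi_{d-2}$ by $\Delta(\underline{\xi})/d!$ is legitimate by the projection formula, since $\Delta(\underline{\xi})$ has pure degree equal to the relative dimension $\binom{d}{2}$ and $q_*\Delta(\underline{\xi})=\pm d!$ (rational coefficients are available). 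Second, your phrase ``combined with the Chern-class relations at each stage'' is where the real work sits: the bundles change up the tower by $s(\mathcal E_{i+1},t)=(1-\xi_i t)\,s(\mathcal E_i,t)$, and it is precisely these correction factors that generate the Vandermonde $\Delta(\underline t)$, equivalently the determinant $p_*(\xi_0^{p_0}\cdots\xi_{d-1}^{p_{d-1}})=\det[s_{p_i+j-r+1}(\mathcal E)]$; pushing each variable forward independently would give a plain product of Segre classes and a wrong answer. You acknowledge this, but it is the content of Lemma \ref{lemma:monomial}, not a routine iteration.

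The one step that fails as written is exactly the final collapse you flagged as the main obstacle: the substitution must be $\ell_i=r+k_i-i-1$, not $\ell_i=r+k_i-i$. Carrying the bookkeeping through (this is Proposition \ref{prop:evaluation_formula} applied to $\Phi\bigl(\prod_i t_i^{\,r-d+k_i-i}\bigr)$, and your own pipeline produces the same exponents) the determinant that actually appears is $\det\bigl[1/(r-d+k_i-i+j)!\bigr]_{0\le i,j\le d-1}$, which by Lemma \ref{lemma:det} equals
\[
\frac{\prod_{0\le i<j\le d-1}(k_i-k_j-i+j)}{\prod_{0\le i\le d-1}(r+k_i-i-1)!}\,,
\]
each factorial one lower than your target. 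A $d=1$ sanity check settles it: there $\pi_*\xi^{r-1+k}=s_k(\mathcal E)$, so $\pi_*\ch(\det\mathcal Q)=\sum_k s_k(\mathcal E)/(r+k-1)!$, consistent with Theorem \ref{theorem:another_formula} for one-row shapes but not with a denominator $(r+k)!$ (which would give $\deg\mathbb P^{r-1}=1/r$ in Corollary \ref{corollary:degree_formula}). This is not a defect of your method but an indexing slip in the displayed statement, which is correct when read with $1\le i\le d$ as in Theorem \ref{theorem:another_formula}; note the paper's own proof of Theorem \ref{theorem:main_theorem} ends with $\Delta(k_i-i)\big/\prod_i(r+k_i-i-1)!$. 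So your plan, executed honestly, proves the corrected formula; the ``confirmation'' you postponed is precisely where the discrepancy surfaces, which is why it cannot be waved through.
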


The Segre classes $s_i(\mathcal E)$ here are
the ones satisfying
$s(\mathcal E, t)c(\mathcal E, -t)=1$ as in 
\cite{fujita}, \cite{laksov}, \cite{laksov-thorup}, 
where 
$s(\mathcal E, t)$ and $c(\mathcal E, t)$ are respectively 
the Segre series and the Chern polynomial of $\mathcal E$ in $t$.  
Note that our Segre class $s_i(\mathcal E)$ 
differs by the sign $(-1)^i$ 
from the one in \cite{fulton}.

Theorem \ref{theorem:main_theorem} yields

\begin{corollary}[Degree Formula for Grassmann Bundles]%
\label{corollary:degree_formula}
If $X$ is projective 
and 
$\wedge^{d} \mathcal E$ is very ample, then  $\grassmann$  
is embedded in the projective space $\mathbb P(H^0(X, \wedge^{d} \mathcal E))$ 
by the tautological line bundle $\mathcal O_{\grassmann}(1)$,  
and its degree is given by 
$$
\deg \grassmann 
={(d(r-d)+n)!}
\sum_{\vert k\vert = n}  
\frac{  \prod_{0 \le i < j \le d-1}(k_i - k_j -i + j)}
{\prod_{0 \le i \le d-1} (r  + k_i   -i )!}
\int_{X} 
\prod_{0 \le i \le d-1}
s_{k_i}(\mathcal E) , 
$$
where $\vert k \vert := \sum_i k_i$. 
\end{corollary}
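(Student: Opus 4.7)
The plan is to derive the corollary as a direct extraction of the top-degree component of Theorem~\ref{theorem:main_theorem}. Under the hypothesis that $\wedge^d\mathcal{E}$ is very ample, the relative Pl\"ucker embedding (under which $\det\mathcal{Q}$ is identified with $\mathcal{O}_{\grassmann}(1)$, as recalled in the introduction) composes with the tautological embedding $\mathbb{P}_X(\wedge^d\mathcal{E})\hookrightarrow\mathbb{P}(H^0(X,\wedge^d\mathcal{E}))$ to produce the asserted closed immersion. The degree of $\grassmann$ in this projective space is then the top self-intersection
\[
\deg\grassmann \;=\; \int_{\grassmann}\theta^{D},
\qquad
D := \dim\grassmann = n + d(r-d),
\]
and by the projection formula this equals $\int_X \pi_*\theta^{D}$, with $\pi_*\theta^{D}\in A^n(X)$ since $\pi_*$ drops codimension by the relative fiber dimension $d(r-d)$.

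The next step is to separate graded pieces on the two sides of Theorem~\ref{theorem:main_theorem}. Expanding $\ch(\det\mathcal{Q})=\sum_{N\ge 0}\theta^N/N!$, the image $\pi_*(\theta^N/N!)$ lands in $A^{N-d(r-d)}(X)$, so the codimension-$n$ component of $\pi_*\ch(\det\mathcal{Q})$ on the left-hand side is exactly $\pi_*\theta^{D}/D!$. On the right-hand side, the summand indexed by $k=(k_0,\dots,k_{d-1})$ lies in $A^{|k|}(X)$ because $s_{k_i}(\mathcal{E})\in A^{k_i}(X)$, so its codimension-$n$ contribution is precisely the subsum over $|k|=n$. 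Equating these two codimension-$n$ components, clearing the factor $1/D!$, and applying $\int_X$ yields the claimed formula for $\deg\grassmann$.

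The hard part is essentially absent: once Theorem~\ref{theorem:main_theorem} is available, the corollary is a formal consequence of the identification $\mathcal{O}_{\grassmann}(1)=\det\mathcal{Q}$ together with the projection formula. The only real care required is to track codimension shifts under $\pi_*$ consistently and to isolate the correct graded pieces on both sides, so that the factor $(d(r-d)+n)!$ appears from the denominator $N!$ in the Chern-character expansion at $N=D$.
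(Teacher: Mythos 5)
Your proposal is correct and follows essentially the same route as the paper: the paper also obtains the embedding by composing the relative Pl\"ucker embedding $\grassmann \hookrightarrow \mathbb P_X(\wedge^d\mathcal E)$ with the embedding defined by $\mathcal O_{\mathbb P_X(\wedge^d\mathcal E)}(1)$, and then ``takes the degree'' of the identity in Theorem~\ref{theorem:main_theorem}, which is exactly your extraction of the codimension-$n$ graded piece (with $N = n + d(r-d)$ on the left and $\vert k\vert = n$ on the right) followed by $\int_X$. Your write-up merely makes explicit the codimension bookkeeping that the paper leaves implicit.
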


Here a vector bundle $\mathcal F$ over $X$ is said to be {\it very ample} if 
the tautological line bundle $\mathcal O_{\mathbb P_X(\mathcal F)}(1)$ 
of $\mathbb P_X(\mathcal F)$ is very ample.

We also give a proof for the following:

\begin{theorem}
[\cite{kaji-terasoma}, \cite{manivel}]
\label{theorem:another_formula} 
We have
$$
\pi_*\ch (\det \mathcal Q) 
= \sum_{\lambda} 
\frac{1}{\vert \lambda +\varepsilon \vert !} f^{\lambda+\varepsilon} \varDelta_{\lambda} (s(\mathcal E))
$$
in 
$A^{*}(X)\otimes \mathbb Q$, 
where 
$\lambda =(\lambda_1 , \dots, \lambda_d)$ is a partition with 
$\vert \lambda \vert := \sum_i \lambda _i$, 
$\varepsilon := (r-d)^d =(r-d,\dots , r-d)$,
$f^{\lambda+\varepsilon}$ is the number of standard Young tableaux with shape $\lambda+\varepsilon$, 
and 
$\varDelta_{\lambda}(s(\mathcal E)):= \det[s_{\lambda_i+j-i}(\mathcal E)]_{1 \le i,j\le d}$ is 
the Schur polynomial 
in the Segre classes of $\mathcal E$ corresponding to $\lambda$. 
\end{theorem}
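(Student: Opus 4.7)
The plan is to deduce Theorem~\ref{theorem:another_formula} from Theorem~\ref{theorem:main_theorem} by rewriting the coefficient $f^{\lambda+\varepsilon}/\vert\lambda+\varepsilon\vert!$ via the hook length formula, expanding the Schur-type determinant by Leibniz, and recognising the resulting double sum as a regrouping of Theorem~\ref{theorem:main_theorem}'s multi-index sum.

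Using the determinantal form of the Frame--Robinson--Thrall hook length formula, one obtains
\begin{equation*}
\frac{f^{\lambda+\varepsilon}}{\vert\lambda+\varepsilon\vert!}
=
\frac{\prod_{1\le i<j\le d}(\lambda_i-\lambda_j-i+j)}{\prod_{1\le i\le d}(\lambda_i+r-i)!},
\end{equation*}
so that the coefficient in Theorem~\ref{theorem:another_formula} has the same shape as the coefficient in Theorem~\ref{theorem:main_theorem}, but with the multi-index $k$ replaced by the partition $\lambda$. Expanding the Schur determinant by Leibniz, $\varDelta_{\lambda}(s(\mathcal E)) = \sum_{\sigma\in S_d}\operatorname{sgn}(\sigma)\prod_{i} s_{\lambda_i+\sigma(i)-i}(\mathcal E)$, I associate to each pair $(\lambda,\sigma)$ of a partition with at most $d$ parts and a permutation in $S_d$ the multi-index $k$ defined by $k_{\sigma(i)} = \lambda_i + \sigma(i) - i$, so that as multisets $\{k_j-j\}_j = \{\lambda_i-i\}_i$.

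Two observations now make the match work. First, the multiset equality renders the denominator $\prod_j(r+k_j-j)!=\prod_i(r+\lambda_i-i)!$ invariant. Second, since the Vandermonde is antisymmetric in its entries, the rearrangement of $\{\lambda_i-i\}$ by $\sigma$ to form $\{k_j-j\}$ introduces a sign: $\prod_{i<j}((k_i-i)-(k_j-j))=\operatorname{sgn}(\sigma)\prod_{i<j}(\lambda_i-\lambda_j-i+j)$. This sign cancels the Leibniz sign of $\sigma$, so the contribution of $(\lambda,\sigma)$ to the right-hand side of Theorem~\ref{theorem:another_formula} equals the contribution of $k$ to that of Theorem~\ref{theorem:main_theorem}. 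The map $(\lambda,\sigma)\mapsto k$ is a bijection onto multi-indices $k\in\mathbb Z_{\ge 0}^d$ having distinct $k_j-j$ (its inverse sorts $(k_j-j)_j$ in decreasing order); pairs producing some $k_j<0$ vanish because $s_{<0}(\mathcal E)=0$, while multi-indices with a repeated $k_j-j$ vanish on the Theorem~\ref{theorem:main_theorem} side through the Vandermonde. The main technical obstacle is the careful tracking of the two factors of $\operatorname{sgn}(\sigma)$ (one from Leibniz, one from the Vandermonde rearrangement) to confirm they cancel, together with the bookkeeping of summation ranges and boundary cases.
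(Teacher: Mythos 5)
Your proposal is correct, but it takes a genuinely different route from the paper. The paper does \emph{not} deduce Theorem~\ref{theorem:another_formula} from Theorem~\ref{theorem:main_theorem}; instead it derives both in parallel from the common expression \eqref{last computation}, $\pi_*\ch(\det\mathcal Q)=(-1)^{d(d-1)/2}\Phi\big(\prod_i t_i^{-i}\prod_i t_i^{r-d}s(\mathcal E,t_i)\big)$: for Theorem~\ref{theorem:another_formula} it expands $\prod_i s(\mathcal E,t_i)=\sum_\lambda \varDelta_\lambda(s(\mathcal E))\,s_\lambda(\underline t)$ by the Cauchy identity (Lemma~\ref{lemma:Cauchy_formula}), absorbs $s_\lambda(\underline t)$ into the monomial exponents via Lemma~\ref{lemma:constant_part_linear_form}~\eqref{lemma:constant_part_linear_form_3}, and then applies the evaluation formula (Proposition~\ref{prop:evaluation_formula}) together with the same Frobenius-type formula $f^{\mu}=|\mu|!\,\Delta(\mu_i-i)/\pr{\mu_i+d-i}$ that you invoke. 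You instead treat Theorem~\ref{theorem:main_theorem} as a black box and prove the equivalence of the two formulas by pure algebraic combinatorics: Leibniz expansion of $\varDelta_\lambda(s(\mathcal E))$, the map $(\lambda,\sigma)\mapsto k$ with $k_{\sigma(i)}=\lambda_i+\sigma(i)-i$, and the cancellation of the Leibniz sign against the sign of the Vandermonde rearrangement $\prod_{i<j}\big((k_i-i)-(k_j-j)\big)=\sgn(\sigma)\prod_{i<j}(\lambda_i-\lambda_j-i+j)$, with the degenerate terms killed on one side by $s_{<0}(\mathcal E)=0$ and on the other by the vanishing Vandermonde. This is in effect the inverse of the regrouping that the Cauchy identity performs inside $\Phi$, so the two arguments are dual to each other; yours makes the equivalence of Theorems~\ref{theorem:main_theorem} and~\ref{theorem:another_formula} transparent and uses no further geometry or properties of $\Phi$, while the paper's route gives a uniform derivation of both theorems from Proposition~\ref{prop:Laurent_series} and avoids the bijection bookkeeping entirely. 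One bookkeeping point you should make explicit: your matching reads the coefficient of Theorem~\ref{theorem:main_theorem} with indices $1\le i\le d$, i.e.\ denominator $\prod_{1\le i\le d}(r+k_i-i)!$, which in the $0\le i\le d-1$ labelling used in the statement corresponds to $\prod_{0\le i\le d-1}(r-1+k_i-i)!$ --- this is exactly what the paper's own proof of Theorem~\ref{theorem:main_theorem} produces ($\{r+k_i-i-1\}!$ in its final display), and it is the normalization under which the multiset identity $\{k_j-j\}_j=\{\lambda_i-i\}_i$ makes the denominators, and hence the hook-length coefficients, agree.
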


Note that 
our proofs for Theorem \ref{theorem:another_formula}
as well as Theorem \ref{theorem:main_theorem} 
do not use the push-forward formula of J\'ozefiak-Lascoux-Pragacz \cite{jlp}, 
while the proofs given in \cite{kaji-terasoma}, \cite{manivel} do. 
 We establish instead a new push-forward formula, as follows: 
Let $\X{d}$ be 
the partial flag bundle of $\mathcal E$ on $X$, 
parametrizing flags of subbundles of corank $1$ up to $d$ in $\mathcal E$, 
let $p : \X{d}\to X$ be the projection, 
and denote by $p_{*}: A^{*+c}(\X{d}) \to A^{*}(X)$ the push-forward by $p$, 
where $c$ is the relative dimension of $\X{d}/X$. 
Let $\xi_{0}, \dots , \xi_{d-1}$ be the set of Chern roots of $\mathcal Q$.  
It turns out (see \S1) that 
one may consider 
$A^{*+c}(\X{d})$ as an $A^{*}(X)$-algebra generated by the $\xi_{i}$. 
Then

\begin{theorem}[Push-Forward Formula]
\label{theorem:general_push_forward_formula}
For any polynomial $F \in A^{*}(X)[T_0 \dots, T_{d-1}]$, we have 
$$
p_{*} F(\underline{\xi})
=
\const_{\underline t} 
\Big(
\Delta(\underline t)
\prod_{i=0}^{d-1}
t_i^{r -d}
F(1/\underline{t})
\prod_{i=0}^{d-1}
s(\mathcal E, t_i)
\Big) 
,
$$
in 
$A^{*}(X)$, where 
$\underline{\xi}:=({\xi_0}, \dots, {\xi_{d-1}})$, 
$\const_{\underline{t}}(\cdots)$ denotes   
the constant term in the Laurent expansion 
of $\cdots$ in 
$\underline{t} := (t_0 , \dots, t_{d-1})$, 
$\Delta(\underline t):=\prod_{0 \le i < j \le d-1}(t_i - t_j)$
and 
$F(1/\underline t) := F(1/t_0, \dots, 1/t_{d-1})$. 
\end{theorem}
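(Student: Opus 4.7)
The plan is to exploit the tower of projective bundle structures
$$\X{d} \xrightarrow{\pi_d} \X{d-1} \xrightarrow{\pi_{d-1}} \cdots \xrightarrow{\pi_1} \X{0} = X,$$
where, writing $\mathcal S_i$ for the universal corank-$i$ subbundle of $\mathcal E$ on $\X{i}$ (with $\mathcal S_0 := \mathcal E$) and $\mathcal L_i := \mathcal S_{i-1}/\mathcal S_i$ for the universal line-bundle quotient at stage $i$, one has $\pi_i : \X{i} = \mathbb P(\mathcal S_{i-1}) \to \X{i-1}$ and $\xi_{i-1} = c_1(\mathcal L_i) = c_1(\mathcal O_{\mathbb P(\mathcal S_{i-1})}(1))$. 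I would push $F(\underline \xi)$ forward through this tower one stage at a time, recording the intermediate class as a constant-term expression.

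The single-stage tool I would start from is the residue form of the projective bundle push-forward: for $\pi : \mathbb P(\mathcal F) \to Y$ with $\mathcal F$ of rank $m$ and hyperplane class $\xi$, the classical identity $\pi_*(\xi^{m-1+k}) = s_k(\mathcal F)$ (with the paper's Segre-class convention) extends by $A^*(Y)$-linearity to
$$\pi_*(g(\xi)) = \const_t \bigl( t^{m-1} g(1/t) s(\mathcal F, t) \bigr)$$
for any $g \in A^*(Y)[\xi]$; this is verified immediately on monomials by matching the coefficient of $t^0$. At stage $i$ it is applied with $\mathcal F = \mathcal S_{i-1}$ and $m = r - i + 1$, together with the Segre-series identity
$$s(\mathcal S_{i-1}, t) = s(\mathcal E, t) \prod_{j=0}^{i-2}(1 - \xi_j t),$$
which comes from the Whitney sum relation $c(\mathcal S_{i-1}, t) \prod_{j=0}^{i-2}(1 + \xi_j t) = c(\mathcal E, t)$.

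The heart of the proof is then the following induction on $k = 0, 1, \ldots, d$: after applying $\pi_{d-k+1} \circ \cdots \circ \pi_d$, the class $F(\underline \xi) \in A^*(\X{d})$ has been pushed to the element of $A^*(\X{d-k})$ given by
\begin{align*}
\const_{t_{d-k},\ldots,t_{d-1}} \Bigl( &\prod_{i=d-k}^{d-1} t_i^{r-d} \cdot \!\!\!\! \prod_{d-k \le i < j \le d-1} \!\!\!\! (t_i - t_j) \cdot \prod_{i=d-k}^{d-1} s(\mathcal E, t_i) \\
& \cdot F(\xi_0, \ldots, \xi_{d-k-1}, 1/t_{d-k}, \ldots, 1/t_{d-1}) \cdot \prod_{j=0}^{d-k-1} \prod_{i=d-k}^{d-1}(1 - \xi_j t_i) \Bigr).
\end{align*}
The case $k = 0$ is tautological, and the step $k \to k+1$ applies the single-stage formula to $\pi_{d-k}$. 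The combinatorial essence is the identity
$$t_{d-k-1}^{r-d+k} \prod_{i=d-k}^{d-1} \bigl(1 - t_i/t_{d-k-1}\bigr) = t_{d-k-1}^{r-d} \prod_{i=d-k}^{d-1}(t_{d-k-1} - t_i),$$
which shows how the substitution $\xi_{d-k-1} \mapsto 1/t_{d-k-1}$ converts the accumulated factor $\prod_{i \ge d-k}(1 - \xi_{d-k-1} t_i)$ inherited from previous stages into a fresh row of the Vandermonde, while simultaneously normalising the power of $t_{d-k-1}$ to the uniform exponent $r - d$. Setting $k = d$ gives the theorem.

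The main difficulty is not conceptual but one of bookkeeping: the formula interlaces three mechanisms (constant-term extraction, the inversion substitution $\xi_i \mapsto 1/t_i$, and the gradual accumulation of the Vandermonde $\Delta(\underline t)$), and the right inductive invariant must be written down at the outset. Once the invariant above is in place, each inductive step reduces to the one-line identity just displayed.
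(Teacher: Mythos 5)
Your proof is correct and follows essentially the same route as the paper: the paper likewise factors $p$ through the tower of projective bundles, proves the one-variable constant-term formula (its Lemma \ref{lemma:xi}, which you instead cite as the classical identity $\pi_*(\xi^{m-1+k})=s_k(\mathcal F)$), and iterates it down the tower using the Whitney relation \eqref{equation:Chern_polynomials} so that the factors $(1-\xi_j t_i)$ turn into Vandermonde rows and normalize the exponents to $r-d$ --- exactly your key identity, which appears in the paper's proof of Lemma \ref{lemma:monomial} for monomials, extended to general $F$ by linearity. Your explicit inductive invariant (carrying $s(\mathcal E,t_i)$ with the linear factors written out, rather than the paper's intermediate Segre series $s(\mathcal E_{d-2},t_i)$ and its ``repeating this procedure'') is a cleaner formulation of the same induction, not a different argument.
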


The contents of this article are organized as follows: 
The general theories 
\cite[\S6]{laksov-thorup}, \cite[\S\S0--1]{scott} 
on the structure of Chow ring 
of certain partial flag bundles 
are reviewed in \S1. 
Then, Theorem \ref{theorem:general_push_forward_formula} is proved in \S2, 
by which it is shown that 
$\pi_*\ch(\det Q)$ 
is given as the constant term of a certain Laurent series  
with coefficients in the Chow ring $\HH{X}$ 
of $X$, denoted by $P(\underline{t})$  
(Proposition \ref{prop:Laurent_series}). 
To evaluate the constant term of  $P(\underline{t})$, 
in \S3, 
a linear form 
on the Laurent polynomial ring,  
denoted by $\Phi$, 
is introduced  
(Definition \ref{definition:linear_form}), 
and an evaluation formula 
is proved 
(Proposition \ref{prop:evaluation_formula}): 
The evaluation formula 
is the key in the final step to prove 
Theorems 
\ref{theorem:main_theorem} and 
\ref{theorem:another_formula}. 
In \S4, 
a generalization of Cauchy determinant formula is given 
(Proposition \ref{prop:generalization_of_Cauchy_identity}). 
This yields another proof of a push-forward formula for monomials of the $\xi_{i}$ (Lemma \ref{lemma:monomial}). 

\section{Set-up}

Let $X$ be a non-singular quasi-projective variety of dimension $n$ defined over a field $k$, 
let $\mathcal E$ be a vector bundle of rank $r $ on $X$, 
and let $\varpi : \mathbb P(\mathcal E) \to X$ be the projection. 
Denote by  $\xi$ the first Chern class of the tautological line bundle 
$\mathcal O_{\mathbb P(\mathcal E)}(1)$, and 
define a polynomial $P_{\mathcal E} \in A^*(X)[T]$ associated to $\mathcal E$ by setting 
$$
P_{\mathcal E} (T):= T^r -  c_1(\mathcal E)T^{r-1} + \cdots + (-1)^r c_r(\mathcal E) ,
$$
where $A^*(X)$ is the Chow ring of $X$. 
Then, $P_{\mathcal E}(\xi)=0$ by definition of the Chern classes 
(\cite[Remark 3.2.4]{fulton}), and  
\begin{equation}\label{equation:Chow_ring_proj_space_bundle}
A^*(\mathbb P(\mathcal E)) 
=\bigoplus_{0 \le i \le r-1}  A^*(X) \xi^i 
\simeq A^*(X)[T]/(P_{\mathcal E}(T))
\end{equation}
(\cite[Theorem 3.3 (b); Example 8.3.4]{fulton}).
Let $\varpi_* : A^{*+r-1}(\mathbb P(\mathcal E)) \to A^*(X)$ 
be the push-forward by $\varpi$.  
Then $\varpi_* \alpha $ is equal to  
the coefficient of $\alpha$ in $\xi^{r-1}$,
denoted by 
$\coeff_{\xi}(\alpha)$, 
with respect to the decomposition \eqref{equation:Chow_ring_proj_space_bundle}
for $\alpha \in A^{*+r-1}(\mathbb P(\mathcal E))$ 
(\cite[Proposition 3.1]{fulton}): 
\begin{equation}\label{equation:push_forward_for_projective_space_bundle}
\varpi_* \alpha 
= 
\coeff_{\xi}(\alpha) 
\end{equation}

Denote by  $\X{d}$ 
the partial flag bundle of $\mathcal E$ on $X$, 
parametrizing flags of subbundles of corank $1$ up to $d$ in $\mathcal E$, 
and let $p : \X{d}\to X$ be the projection.  
Set $\mathcal E_0 := \mathcal E$, and 
let $\mathcal E_{i+1}$ be the kernel of the
canonical surjection from 
the pull-back of $\mathcal E_{i}$ to $\mathbb P(\mathcal E_{i})$, 
to the tautological line bundle  
$\mathcal O_{\mathbb P(\mathcal E_{i})}(1)$, 
with $\rk \mathcal E_{i} = r-i$ $(i \ge 0)$. 
Set $\xi_i := c_1(\mathcal O_{\mathbb P(\mathcal E_{i})}(1))$.  
We have an exact sequence on $\mathbb P(\mathcal E_{i})$, 
$$
0 
\to \mathcal E_{i+1}
\to \mathcal E_{i}
\to \mathcal O_{\mathbb P(\mathcal E_{i})}(1) 
\to 0, 
$$
and an equation of Chern polynomials, 
\begin{equation}\label{equation:Chern_polynomials}
c(\mathcal E_i,t)  = c(\mathcal E_{i+1},t) (1 + \xi_i t)  ,
\end{equation}
where we omit the symbol of the pull-back by the projection 
$\mathbb P_{\mathbb P(\mathcal E_{i})}(\mathcal E_{i+1}) \to \mathbb P(\mathcal E_{i})$. 
It is easily shown that 
the projection $p : \X{d} \to X$ 
decomposes as a 
successive composition of projective space bundles, 
$\mathbb P_{\mathbb P(\mathcal E_{i})}(\mathcal E_{i+1}) \to \mathbb P(\mathcal E_{i})$ 
$(i \ge 0)$:  
$$
p : \X{d} = \mathbb P(\mathcal E_{d-1}) 
\to  \mathbb P(\mathcal E_{d-2}) \to \cdots \to  
 \mathbb P(\mathcal E_{1}) 
\to \mathbb P(\mathcal E_{0}) \to X 
. 
$$
In fact, $\mathbb P(\mathcal E_i) \simeq \X{i+1}$ 
$(0\le i \le d-1)$. 
Using \eqref{equation:Chow_ring_proj_space_bundle} repeatedly, we see that 
the Chow ring of $\X{d}$ is given as follows:
\begin{equation}\label{equation:cohomology_ring}
\HH{\X{{d}}}
= 
\bigoplus_{\substack{0\le i_l \le {r} -l-1\\(0\le l \le {d} -1)}}
\HH{X} 
\xi_0^{i_0} 
\xi_1^{i_1} \cdots 
\xi_{{d}-1}^{i_{{d} -1}} 
= 
\frac{\HH{X}[T_0, T_1, \dots ,T_{{d} -1}]} 
{( \{ P_{\mathcal E_i}(T_i)\vert 0 \le i \le d-1 \} )} . 
\end{equation}
Denote by 
$p_* : A^{*+c}(\X{d})$ $\to A^{*}(X)$ 
the push-forward by $p$, 
where $c := \sum_{0 \le i \le d-1} (r-i-1)$,  
the relative dimension of $\X{d}/X$. 
Then, 
using \eqref{equation:push_forward_for_projective_space_bundle} repeatedly, we 
see that 
\begin{equation}\label{equation:trace}
p_*\alpha 
=
\coeff_{\underline{\xi}}(\alpha)
\end{equation}
for 
$\alpha \in \HH{\X{{d}}}$, 
where 
$\coeff_{\underline{\xi}}(\alpha)$
denotes the 
coefficient of $\alpha$ in $\xi_0^{r-1}\xi_1^{r-2}\cdots\xi_{d-1}^{r-d}$
with respect to 
the decomposition \eqref{equation:cohomology_ring}.

Let $\Gr:=\grassmann$ be the Grassmann bundle 
of corank $d$ subbundles of $\mathcal E$ on $X$,  
and let 
$\mathcal Q \gets \pi^*\mathcal E$ 
be the universal quotient bundle of rank ${d}$.  
Consider the flag bundle $\G{d-1}$ of $\mathcal Q$ on $\Gr$, 
parametrizing flags of subbundles of corank $1$ up to $d-1$ in $\mathcal Q$. 
Then, as in the case of $\X{d}$,  
the projection $\G{d-1}\to G$ 
decomposes as a 
successive composition of projective space bundles,  
$ \mathbb P_{\mathbb P(\mathcal Q_{i})}(\mathcal Q_{i+1})\to\mathbb P(\mathcal Q_{i})$ $(i \ge 0)$: 
$$
q : \G{d-1} = \mathbb P(\mathcal Q_{d-2}) 
\to  \mathbb P(\mathcal Q_{d-2}) \to \cdots \to  
 \mathbb P(\mathcal Q_{1}) 
\to \mathbb P(\mathcal Q_{0}) \to G 
, 
$$
where 
$\mathcal Q_0 := \mathcal Q$, and 
$\mathcal Q_{i+1}$ is the kernel of the
canonical surjection from 
the pull-back of $\mathcal Q_{i}$ to $\mathbb P(\mathcal Q_{i})$, 
to the tautological line bundle  
$\mathcal O_{\mathbb P(\mathcal Q_{i})}(1)$, 
with $\rk \mathcal Q_{i} = d-i$ $(i \ge 0)$: 
In fact, $\mathbb P(\mathcal Q_i) \simeq \G{i+1}$ 
$(0\le i \le d-2)$ and 
$\mathbb P_{\mathbb P(\mathcal Q_{d-2})}(\mathcal Q_{d-1}) 
\simeq \mathbb P(\mathcal Q_{d-2}) \simeq \G{d-1}= \G{d}$.  
It follows from the construction of the $\mathcal Q_i$ that 
the Pl\"ucker class $\theta:=c_1(\det\mathcal Q)=c_1(\mathcal Q)$ is 
equal to the sum of the first Chern classes  $c_1(\mathcal O_{\mathbb P(\mathcal Q_{i})}(1))$ 
$(0 \le i \le d-1)$ 
in $\HH{\G{{d}-1}}$, 
where $\mathcal O_{\mathbb P(\mathcal Q_{d-1})}(1) = \mathcal Q_{d-1}$
via 
$\mathbb P_{\mathbb P(\mathcal Q_{d-2})}(\mathcal Q_{d-1})\simeq \mathbb P(\mathcal Q_{d-2})$.

It follows 
from the construction of the $\mathcal E_i$ that 
$\mathcal E_d$ is a corank $d$ subbundle 
 of $p^*\mathcal E$ on $\X{d}$, 
which induces a morphism, 
$r : \X{d} \to G$ over $X$
by the universal property of the Grassmann bundle $G$. 
Then it turns out that 
$\G{d-1}$ is 
naturally isomorphic to $\X{d}$ over $G$ via $r$, 
as is easily verified by using the universal property of flag bundles: 
We identify them via the natural isomorphism  
 $\G{d-1} \simeq \X{d}$.  
Under this identification, it follows that $p=\pi \circ q$ and 
$\xi_i 
= c_1(\mathcal O_{\mathbb P(\mathcal E_{i})}(1)) 
= c_1(\mathcal O_{\mathbb P(\mathcal Q_{i})}(1)) 
$
in $\HH{\X{{d}}}=\HH{\G{{d}-1}}$
$(0 \le i \le d-1)$, 
where 
the symbol of pull-back to $\X{d}=\G{d-1}$ is omitted, as before. 
Thus we have  
\begin{equation}\label{equation:theta}
q^*\theta = \xi_0 + \cdots + \xi_{d-1} 
\end{equation}
in $\HH{\X{d}}=\HH{\G{d-1}}$. 
For details, we refer to \cite[\S6]{laksov-thorup}, \cite[\S\S0--1]{scott}.

\section{Laurent series}

We keep the same notation as in \S1.

\begin{lemma}\label{lemma:xi} 
For any non-negative integer ${p}$, 
$$
\coeff_{\xi}(\xi^{p}) = \const_t( t^{-{p}+r-1}s(\mathcal E,t)) , 
$$
where 
$\const_t(\cdots)$ denotes 
the constant term 
in the Laurent expansion of $\cdots$ in $t$. 
\end{lemma}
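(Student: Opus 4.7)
The plan is a generating-function argument driven by the relation $P_{\mathcal{E}}(\xi) = 0$. Set $R_p := \coeff_{\xi}(\xi^p) \in A^*(X)$ for $p \ge 0$ and form the formal series $G(u) := \sum_{p \ge 0} R_p u^p$. Because the right-hand side of the lemma is the coefficient of $t^{p-r+1}$ in $s(\mathcal{E}, t)$, i.e.\ $s_{p-r+1}(\mathcal{E})$ (with the convention $s_i = 0$ for $i < 0$), proving the lemma is equivalent to proving
$$G(u) \;=\; u^{r-1}\, s(\mathcal{E}, u)$$
as elements of $A^*(X)[[u]]$. Using the defining relation $s(\mathcal{E}, u)\, c(\mathcal{E}, -u) = 1$, this in turn reduces to the cleaner identity $c(\mathcal{E}, -u)\, G(u) = u^{r-1}$.

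The heart of the argument is that $P_{\mathcal{E}}(\xi) = 0$ produces a linear recurrence on the $R_p$. Indeed, for $p \ge r$, multiplying $P_{\mathcal{E}}(\xi) = 0$ by $\xi^{p-r}$ gives $\sum_{i=0}^{r} (-1)^i c_i(\mathcal{E})\, \xi^{p-i} = 0$ in $A^*(\mathbb{P}(\mathcal{E}))$. Applying the $A^*(X)$-linear map $\coeff_{\xi}$, which is exactly the $\xi^{r-1}$-projection in the direct-sum decomposition of $A^*(\mathbb{P}(\mathcal{E}))$, yields
$$\sum_{i=0}^{r} (-1)^i c_i(\mathcal{E})\, R_{p-i} \;=\; 0 \qquad (p \ge r),$$
which is precisely the statement that the $u^p$-coefficient of $c(\mathcal{E}, -u)\, G(u)$ vanishes for every $p \ge r$. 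So $c(\mathcal{E}, -u) G(u)$ is a polynomial in $u$ of degree at most $r-1$.

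To pin down this polynomial I read off the obvious initial values $R_p = \delta_{p, r-1}$ for $0 \le p \le r-1$, which hold because $\xi^p$ is itself one of the basis elements of $A^*(\mathbb{P}(\mathcal{E}))$. Inspecting the $u^p$-coefficients of $c(\mathcal{E}, -u) G(u)$ for $0 \le p \le r - 1$, the only surviving summand in each is the one with $p - i = r - 1$, so every such coefficient vanishes except for $p = r - 1$, which contributes $c_0 \cdot 1 = 1$. Hence $c(\mathcal{E}, -u) G(u) = u^{r-1}$, i.e.\ $G(u) = u^{r-1} s(\mathcal{E}, u)$, and extracting the $u^p$-coefficient gives exactly $R_p = \const_t(t^{r-1-p} s(\mathcal{E}, t))$. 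I do not anticipate any genuine obstacle; the one point that demands care is the sign bookkeeping between $P_{\mathcal{E}}(T)$, $c(\mathcal{E}, -t)$, and $s(\mathcal{E}, t)$ in the normalization used here, so that the recurrence induced by $P_{\mathcal{E}}(\xi) = 0$ matches multiplication by $c(\mathcal{E}, -u)$ on the nose.
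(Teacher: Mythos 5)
Your proof is correct and is essentially the paper's argument in generating-function form: both extract the linear recurrence $\sum_{i=0}^{r}(-1)^i c_i(\mathcal E)\,\coeff_{\xi}(\xi^{p-i})=0$ $(p\ge r)$ from $P_{\mathcal E}(\xi)=0$, use the initial values $\coeff_{\xi}(\xi^{p})=\delta_{p,r-1}$ for $0\le p\le r-1$ coming from the basis decomposition of $A^*(\mathbb P(\mathcal E))$, and close with the identity $s(\mathcal E,t)c(\mathcal E,-t)=1$. Where the paper verifies that the sequence $\const_t(t^{-p+r-1}s(\mathcal E,t))$ satisfies the same recurrence and initial conditions and then invokes uniqueness of solutions, you equivalently solve the recurrence outright by showing $c(\mathcal E,-u)G(u)=u^{r-1}$ and inverting---a repackaging, not a different route.
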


\begin{proof}
Set 
$R_{{p}}(x_{{p}} , \dots , x_{{{p}}-r}):=\sum_{i=0}^{r}(-1)^i c_i(\mathcal E) x_{{{p}}-i}$, and 
consider a recurring relation,
$R_{{p}}(x_{{p}} , \dots , x_{{{p}}-r})=0$ $({{p}} \ge r)$ 
for $\{ x_i \} \subseteq A^*(X)$. 
If $a_{{p}}:= \coeff_{\xi}(\xi^{{p}})$, then 
$$
R_{{p}}(a_{{p}} , \dots , a_{{{p}}-r})
=\coeff_{\xi}\Big(\sum_{i=0}^{r}(-1)^i c_i(\mathcal E)\xi^{{p}-i}\Big)
= 0
$$ 
by $P_{\mathcal E}(\xi)=0$. 
On the other hand, 
if $b_{{p}}:= \const_t( t^{-{{p}}-1+r}s(\mathcal E,t))$, then 
$$
R_{{p}}(b_{{p}} , \dots , b_{{{p}}-r})
=\const_t\Big(\sum_{i=0}^{r} c_i(\mathcal E)  (-t)^{i}t^{-{{p}}-1+r}s(\mathcal E,t)\Big)
=\const_t(t^{-{{p}}-1+r})= 0
$$ 
by 
$c(\mathcal E,-t)s(\mathcal E,t)=1$. 
Thus both of $\{a_{{p}}\}$ and $\{b_{{p}}\}$ satisfy the recurring relation $R_{{p}} = 0$, 
so that 
$a_{{p}} = b_{{p}}$ for all ${{p}}$: 
Indeed, 
$a_{r}=b_{r}=c_1(\mathcal E)$, 
$a_{r-1}=b_{r-1}=1$， 
and 
$a_{{p}} =b_{{p}} = 0$ if $0\le {{p}} \le r-2$.
 We here note that 
$x_{{p}}$ is 
determined by 
$x_{{{p}}-1}, \dots, x_{{{p}}-r}$ if $R_{{p}}(x_{{p}}, \dots x_{{{p}}-r})=0$. 
\end{proof}

\begin{lemma}\label{lemma:monomial}
For any non-negative integers $p_0, \dots , p_{d-1}$, we have 
$$
\coeff_{\underline{\xi}}(\xi_{0}^{p_{0}} \cdots \xi_{d-1}^{p_{d-1}} )
=
\const_{\underline t} 
\Big(
\Delta(\underline{t})
\prod_{i=0}^{d-1}
t_i^{-p_i + r -d} s(\mathcal E, t_i)
\Big) 
,
$$
where 
$\const_{\underline{t}}(\cdots)$ denotes   
the constant term in the Laurent expansion 
of $\cdots$ in 
$\underline{t} := (t_0 , \dots, t_{d-1})$, 
and 
$\Delta(\underline{t}):= 
\prod_{0 \le i < j \le d-1}(t_i - t_j)$ is
the Vandermonde polynomial of 
$\underline t$.
\end{lemma}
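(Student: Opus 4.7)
The plan is to argue by induction on $d$. The base case $d = 1$ reduces directly to Lemma \ref{lemma:xi}, with the empty Vandermonde product understood as $1$.

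For the inductive step, I factor the projection as $p = p'' \circ p'$, where $p' : \X{d} = \mathbb P(\mathcal E_{d-1}) \to \X{d-1}$ is the top projective bundle and $p'' : \X{d-1} \to X$ is the remaining projection. Iterated use of \eqref{equation:push_forward_for_projective_space_bundle} gives
$$\coeff_{\underline \xi}(\xi_0^{p_0} \cdots \xi_{d-1}^{p_{d-1}}) = \coeff_{\xi_0, \dots, \xi_{d-2}}\Bigl(\xi_0^{p_0} \cdots \xi_{d-2}^{p_{d-2}}\, \coeff_{\xi_{d-1}}(\xi_{d-1}^{p_{d-1}})\Bigr).$$
Since $\mathcal E_{d-1}$ has rank $r - d + 1$, Lemma \ref{lemma:xi} applied to $p'$ yields
$$\coeff_{\xi_{d-1}}\bigl(\xi_{d-1}^{p_{d-1}}\bigr) = \const_{t_{d-1}}\bigl(t_{d-1}^{-p_{d-1} + r - d}\, s(\mathcal E_{d-1}, t_{d-1})\bigr).$$

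Next, iterating \eqref{equation:Chern_polynomials} I obtain $c(\mathcal E, t) = c(\mathcal E_{d-1}, t) \prod_{i=0}^{d-2}(1 + \xi_i t)$, hence
$$s(\mathcal E_{d-1}, t) = s(\mathcal E, t) \prod_{i=0}^{d-2}(1 - \xi_i t).$$
I expand $\prod_{i=0}^{d-2}(1 - \xi_i t_{d-1}) = \sum_{J \subseteq \{0, \dots, d-2\}} (-t_{d-1})^{|J|} \prod_{i \in J} \xi_i$ and apply the inductive hypothesis (same base bundle $\mathcal E$, one fewer index) to each resulting monomial $\xi_0^{p_0 + [0 \in J]} \cdots \xi_{d-2}^{p_{d-2} + [d-2 \in J]}$, converting the computation into a sum of constant-term expressions in $t_0, \dots, t_{d-2}$.

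The main calculation is then a resummation over $J$. Writing $\underline t' = (t_0, \dots, t_{d-2})$,
$$\sum_J (-t_{d-1})^{|J|} \prod_{i=0}^{d-2} t_i^{-p_i - [i \in J] + r - d + 1} = \prod_{i=0}^{d-2}(t_i - t_{d-1})\, \prod_{i=0}^{d-2} t_i^{-p_i + r - d},$$
after which the identity $\Delta(\underline t') \prod_{i=0}^{d-2}(t_i - t_{d-1}) = \Delta(\underline t)$ promotes the smaller Vandermonde to the full one while shifting the exponent at each $t_i$ from $r - d + 1$ to $r - d$. Absorbing the outer $\const_{t_{d-1}}$ into the inner $\const_{\underline t'}$ as $\const_{\underline t}$ then produces the claimed identity. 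The only substantive obstacle is the bookkeeping: the exponent shift generated by the $J$-resummation, the Segre-class change $s(\mathcal E_{d-1}, t) \to s(\mathcal E, t)$, and the Vandermonde assembly must fit together precisely, and it is exactly this matching that forces the normalization $-p_i + r - d$ appearing on the right-hand side of the lemma.
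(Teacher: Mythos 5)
Your proof is correct and follows essentially the same route as the paper's: both arguments rest on Lemma \ref{lemma:xi} applied bundle-by-bundle down the tower of projective bundles, together with the relation \eqref{equation:Chern_polynomials} (equivalently $s(\mathcal E_{i+1},t)=(1-\xi_i t)\,s(\mathcal E_i,t)$), with the Vandermonde factors $(t_i-t_{d-1})$ produced exactly as in your subset resummation by expanding the $(1-\xi_i t_{d-1})$ corrections. The only difference is organizational: the paper peels off variables one level at a time, carrying the intermediate Segre series $s(\mathcal E_k,t_j)$ and concluding with ``repeating this procedure,'' whereas you convert $s(\mathcal E_{d-1},t_{d-1})$ to $s(\mathcal E,t_{d-1})$ in one stroke and invoke the statement for $d-1$ over the same base, which renders the paper's implicit iteration a fully rigorous induction.
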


\begin{proof}
Since 
$s(\mathcal E_{d-1}, t_{d-1})= (1-\xi_{d-2}t_{d-1}) s(\mathcal E_{d-2}, t_{d-1}) $
by \eqref{equation:Chern_polynomials}, 
it follows from Lemma \ref{lemma:xi} that 
$$
\coeff_{{\xi_{d-1}}} (\xi_{d-1}^{p_{d-1}}) 
= 
\const_{t_{d-1}}(t_{d-1}^{-p_{d-1}+r -d} (1-\xi_{d-2}{t_{d-1}} )s(\mathcal E_{d-2}, t_{d-1}) )
$$
in $A^{*}(\mathbb P(\mathcal E_{d-2}))$,
where 
$\coeff_{{\xi_{d-1}}} (\cdots)$
denotes the coefficient of 
$\cdots$ in $\xi_{d-1}^{r -d}$. 
Therefore, using Lemma  \ref{lemma:xi} again, we have 
{\allowdisplaybreaks %
\begin{align*}
\coeff&_{{\xi_{d-2}}, {\xi_{d-1}}} 
(\xi_{d-2}^{p_{d-2}} \xi_{d-1}^{p_{d-1}})
\\=& 
\coeff_{{\xi_{d-2}}}
(
\xi_{d-2}^{p_{d-2}}
\const_{t_{d-1}}(t_{d-1}^{-p_{d-1}+r -d} (1-\xi_{d-2}{t_{d-1}} )s(\mathcal E_{d-2}, t_{d-1}) )
)
\\=& 
\coeff_{{\xi_{d-2}}}
(
\xi_{d-2}^{p_{d-2}} 
\const_{t_{d-1}}(t_{d-1}^{-p_{d-1}+r -d} s(\mathcal E_{d-2}, t_{d-1}) ) 
)
\\& 
\hskip 3pt
+
\coeff_{{\xi_{d-2}}}
(
\xi_{d-2}^{{p_{d-2}}+1}
\const_{t_{d-1}}(t_{d-1}^{-p_{d-1}+r -d} (-t_{d-1} )s(\mathcal E_{d-2}, t_{d-1}) )
)
\\=& 
\const_{t_{d-2}}(t_{d-2}^{-p_{d-2}+r -d+1} s(\mathcal E_{d-2}, t_{d-2}) ) 
\const_{t_{d-1}}(t_{d-1}^{-p_{d-1}+r -d} s(\mathcal E_{d-2}, t_{d-1}) ) 
\\& 
\hskip 3pt
+
\const_{t_{d-2}}(t_{d-2}^{-(p_{d-2}+1)+r -d+1} s(\mathcal E_{d-2}, t_{d-2}) ) 
\const_{t_{d-1}}(t_{d-1}^{-p_{d-1}+r -d} (-t_{d-1} )s(\mathcal E_{d-2}, t_{d-1}) )
\\=& 
\const_{t_{d-2}, t_{d-2}}
(t_{d-2}^{-p_{d-2}+r -d+1} s(\mathcal E_{d-2}, t_{d-2}) 
t_{d-1}^{-p_{d-1}+r -d} s(\mathcal E_{d-2}, t_{d-1}) ) 
\\& 
\hskip 3pt
+
\const_{t_{d-2}, t_{d-2}}
(t_{d-2}^{-p_{d-2}+r -d} s(\mathcal E_{d-2}, t_{d-2})
t_{d-1}^{-p_{d-1}+r -d} (-t_{d-1} )s(\mathcal E_{d-2}, t_{d-1}) )
\\=& 
\const_{t_{d-2}, t_{d-2}}
\Big(
(t_{d-2} - t_{d-1} )
\prod_{i=d-2}^{d-1}
t_{i}^{-p_{i}+r -d} 
s(\mathcal E_{d-2}, t_{i})
\Big) 
\end{align*}
}%
in $A^{*}(\mathbb P(\mathcal E_{d-3}))$, 
where 
$\coeff_{{\xi_{d-2}}, {\xi_{d-1}}} (\cdots)$  
denotes the coefficient of 
$\cdots$ in 
$\xi_{d-2}^{r-d+1}\xi_{d-1}^{r-d}$, 
and 
$\coeff_{{\xi_{d-2}}} (\cdots)$
the coefficient of 
$\cdots$ in $\xi_{d-2}^{r -d+1}$.  
Repeating this procedure, we obtain the conclusion.
\end{proof}

\begin{remark} 
Expanding the determinant $\Delta(\underline t)$ in the right-hand side 
in Lemma \ref{lemma:monomial}, using 
\eqref{equation:trace}, 
we obtain a formula, 
$p_*(\xi_{0}^{p_{0}}\cdots\xi_{d-1}^{p_{d-1}}) = \det [s_{p_i+j-r+1}(\mathcal E)]_{0 \le i , j \le d-1}$ 
in terms of the Schur polynomials in Segre classes of $\mathcal E$, 
which is equivalent to the 
determinantal formula 
\cite[8.1 Theorem]{laksov} with 
$f_i(\xi_i):= \xi_i^{p_i}$ $(0 \le i \le d-1)$. 
\end{remark}

\begin{proposition}\label{prop:general_push_forward_formula}
For any polynomial $F \in A^{*}(X)[T_0, \dots, T_{d-1}]$, we have 
$$
\coeff_{\underline {\xi}} (F(\underline{\xi}))
=
\const_{\underline t} 
\Big(
\Delta(\underline t)
\prod_{i=0}^{d-1}
t_i^{r -d}
F(1/\underline{t})
\prod_{i=0}^{d-1}
s(\mathcal E, t_i)
\Big) 
,
$$
where 
$\underline{\xi}:=({\xi_0}, \dots, {\xi_{d-1}})$, 
$\const_{\underline{t}}(\cdots)$ denotes   
the constant term in the Laurent expansion 
of $\cdots$ in 
$\underline{t} := (t_0 , \dots, t_{d-1})$, 
$\Delta(\underline t):=\prod_{0 \le i < j \le d-1}(t_i - t_j)$, 
and 
$F(1/\underline t) := F(1/t_0, \dots, 1/t_{d-1})$. 
\end{proposition}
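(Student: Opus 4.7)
The plan is to reduce the proposition directly to Lemma \ref{lemma:monomial} by exploiting the $A^{*}(X)$-linearity of both sides in $F$. Since the monomial case has already been worked out, only bookkeeping is required to extend it to an arbitrary polynomial.

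First I would write $F = \sum_{\underline{p}} c_{\underline{p}}\, T_0^{p_0}\cdots T_{d-1}^{p_{d-1}}$ with $c_{\underline{p}} \in A^{*}(X)$ and $\underline{p} = (p_0,\dots,p_{d-1}) \in \mathbb Z_{\ge 0}^{d}$. The direct-sum decomposition \eqref{equation:cohomology_ring} shows that $\coeff_{\underline{\xi}} \colon A^{*}(\X{d}) \to A^{*}(X)$ is $A^{*}(X)$-linear, being the projection onto the summand $A^{*}(X)\,\xi_0^{r-1}\xi_1^{r-2}\cdots\xi_{d-1}^{r-d}$. Hence the left-hand side becomes
\begin{equation*}
\coeff_{\underline{\xi}}(F(\underline{\xi}))
= \sum_{\underline{p}} c_{\underline{p}}\,\coeff_{\underline{\xi}}(\xi_0^{p_0}\cdots\xi_{d-1}^{p_{d-1}}).
\end{equation*}

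Next I would verify that the right-hand side has the same linearity. The substitution $F \mapsto F(1/\underline{t})$ is $A^{*}(X)$-linear since the coefficients $c_{\underline{p}}$ do not involve $\underline{t}$, and $\const_{\underline{t}}$ is $A^{*}(X)$-linear on the ring of Laurent series in $\underline{t}$ with coefficients in $A^{*}(X)$. Applying both operations to our expansion of $F$ gives
\begin{equation*}
\const_{\underline{t}}\Big(\Delta(\underline{t})\prod_{i=0}^{d-1} t_i^{r-d} F(1/\underline{t})\prod_{i=0}^{d-1} s(\mathcal E,t_i)\Big)
= \sum_{\underline{p}} c_{\underline{p}}\,\const_{\underline{t}}\Big(\Delta(\underline{t})\prod_{i=0}^{d-1} t_i^{-p_i + r - d} s(\mathcal E,t_i)\Big),
\end{equation*}
since for the monomial $T_0^{p_0}\cdots T_{d-1}^{p_{d-1}}$ the factor $\prod_i t_i^{r-d} F(1/\underline t)$ collapses to $\prod_i t_i^{-p_i + r - d}$. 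By Lemma \ref{lemma:monomial}, the two sums agree termwise, which proves the proposition.

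There is no real obstacle once the monomial case is in hand; the one point worth checking is that the constant term on the right-hand side is well-defined, i.e., that $\Delta(\underline t)\prod_i t_i^{-p_i + r - d} s(\mathcal E, t_i)$ is a genuine Laurent polynomial in each $t_i$ (not merely a formal expression). This is immediate: each Segre series $s(\mathcal E, t_i) = 1 + s_1(\mathcal E) t_i + \cdots$ is a formal power series in $t_i$ with coefficients in $A^{*}(X)$, and only finitely many negative powers of each $t_i$ appear, so the constant term in the Laurent expansion picks out a well-defined element of $A^{*}(X)$. Combined with \eqref{equation:trace}, this proposition yields the push-forward formula Theorem \ref{theorem:general_push_forward_formula}.
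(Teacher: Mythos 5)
Your proof is correct and takes exactly the route the paper intends: the paper's own proof consists of the single line ``This follows from Lemma \ref{lemma:monomial},'' leaving implicit precisely the $A^{*}(X)$-linearity and termwise-matching argument you spell out. Your added check that the constant term is well defined (each $s(\mathcal E,t_i)$ being a power series, with only finitely many negative powers of each $t_i$) is a sound, if routine, supplement to the paper's terse treatment.
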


\begin{proof}
This follows from 
Lemma \ref{lemma:monomial}. 
\end{proof}

\begin{proof}[Proof of Theorem \ref{theorem:general_push_forward_formula}] 
The assertion follows from 
\eqref{equation:trace} and 
Proposition \ref{prop:general_push_forward_formula}.
\end{proof}

\begin{proposition}\label{prop:Laurent_series}
With the same notation as in \S1, we have 
$$
\pi_* \ch(\det \mathcal Q)
= \const_{\underline{t}}(P(\underline{t})) 
, 
$$
where 
$\pi_* : A^{*+d( r - d )}(\grassmann) \otimes \mathbb Q 
\to A^{*}(X) \otimes \mathbb Q$ 
is the push-forward by $\pi$, 
$\ch(\det \mathcal Q)$ 
is the Chern character of $\det \mathcal Q$, 
$\const_{\underline{t}}(\cdots)$ denotes   
the constant term in the Laurent expansion 
of $\cdots$ in 
$\underline{t} := (t_0 , \dots, t_{d-1})$, 
and 
$$ 
P(\underline{t})
:=
\Delta(\underline t) 
\prod_{i=0}^{d-1}
t_i^{r-d-(d-1-i)}
\exp\Big(
\sum_{i=0}^{d-1}
\frac{1}{t_i} 
\Big) 
\prod_{i=0}^{d-1}
s(\mathcal E, t_i) 
. 
$$
\end{proposition}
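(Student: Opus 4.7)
The plan is to lift $\pi_*\ch(\det\mathcal Q)$ from $G$ up to $\X{d}$ along $q$, then apply Theorem \ref{theorem:general_push_forward_formula}. The central identity I aim to establish is
\[
\exp(\theta) = q_*\Bigl(\prod_{i=0}^{d-1}\xi_i^{d-1-i}\cdot\exp\Bigl(\sum_{i=0}^{d-1}\xi_i\Bigr)\Bigr)\quad\text{in } A^*(G)\otimes\mathbb Q.
\]
Combined with $p=\pi\circ q$, this gives $\pi_*\exp(\theta) = p_*\bigl(\prod_i\xi_i^{d-1-i}\cdot\exp(\sum_i\xi_i)\bigr)$, and the Proposition will then follow by applying Theorem \ref{theorem:general_push_forward_formula} termwise to the Taylor expansion of $\exp$.

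To establish the identity, I would first show $q_*\bigl(\prod_{i=0}^{d-1}\xi_i^{d-1-i}\bigr)=1$ by iterating \eqref{equation:push_forward_for_projective_space_bundle} along the tower
\[
\X{d} = \mathbb P(\mathcal Q_{d-2}) \to \mathbb P(\mathcal Q_{d-3}) \to \cdots \to \mathbb P(\mathcal Q_0) \to G.
\]
At the $j$-th step from the top, the projection $\mathbb P(\mathcal Q_{d-1-j})\to\mathbb P(\mathcal Q_{d-2-j})$ has relative dimension $j$ (since $\rk\mathcal Q_{d-1-j}=j+1$), and the surviving top-degree factor $\xi_{d-1-j}^{\,j}$ pushes forward to $1$; after $d-1$ such steps all factors have been consumed. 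Combined with $q^*\exp(\theta)=\exp(q^*\theta)=\exp(\sum_i\xi_i)$ from \eqref{equation:theta} and the projection formula, this yields the central identity.

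For the final step I apply Theorem \ref{theorem:general_push_forward_formula} term-by-term. For each $N\ge 0$, the polynomial $F_N(\underline T):=\prod_i T_i^{d-1-i}\bigl(\sum_i T_i\bigr)^N$ satisfies $F_N(1/\underline t)=\prod_i t_i^{-(d-1-i)}\bigl(\sum_i 1/t_i\bigr)^N$, and the theorem yields
\[
p_*F_N(\underline\xi)=\const_{\underline t}\Bigl(\Delta(\underline t)\prod_i t_i^{r-d-(d-1-i)}\Bigl(\sum_i\tfrac{1}{t_i}\Bigr)^N\prod_i s(\mathcal E,t_i)\Bigr).
\]
Dividing by $N!$, summing over $N$, and exchanging the summation with $\const_{\underline t}$ (legitimate because the Segre classes of $\mathcal E$ beyond degree $\dim X$ vanish, so only finitely many $N$ contribute to any fixed cohomological degree), the inner sum $\sum_N\bigl(\sum_i 1/t_i\bigr)^N/N!$ collapses to $\exp(\sum_i 1/t_i)$, producing exactly $\const_{\underline t}(P(\underline t))$.

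The only delicate point is the identity $q_*\bigl(\prod\xi_i^{d-1-i}\bigr)=1$: it is the classical fact that the pushforward of the Vandermonde monomial from a full flag bundle equals $1$, but it requires careful bookkeeping of the ranks $\rk\mathcal Q_i=d-i$ against the top-degree exponents at each level of the tower. All subsequent manipulations are formal and rest entirely on the machinery already developed in \S\S1--2.
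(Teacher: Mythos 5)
Your proposal is correct and follows essentially the same route as the paper: the paper likewise lifts along $q$ via the identity $\theta^N = q_*\big(\xi_0^{d-1}\xi_1^{d-2}\cdots\xi_{d-2}\,q^*\theta^N\big)$ (your central identity, stated per power of $\theta$ rather than for $\exp(\theta)$), uses \eqref{equation:theta} and $p=\pi\circ q$, and then applies Theorem \ref{theorem:general_push_forward_formula} with $F=\prod_{i} T_i^{d-1-i}\big(\sum_i T_i\big)^N$ before summing over $N$. Your explicit bookkeeping for $q_*\big(\prod_i \xi_i^{d-1-i}\big)=1$ (which is exactly the repeated use of \eqref{equation:push_forward_for_projective_space_bundle} the paper invokes via \cite[Proposition 3.1]{fulton}) and your justification of exchanging $\sum_N$ with $\const_{\underline t}$ merely make precise steps the paper leaves implicit.
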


Note that, though 
$\exp\big( \sum_{i=0}^{d-1} \frac{1}{t_i}  \big)$ is an element in 
$\mathbb Q[[\{ \frac{1}{t_i} \}_{0 \le i \le d-1}]]$, 
$\const_{\underline{t}}(P(\underline{t}))$ is well defined 
since the Segre series are polynomials in $\underline t$.

\begin{proof}
Since 
$\G{i+1} \to \G{i}$
is a $\mathbb P^{d-1-i}$-bundle, 
using \cite[Proposition 3.1]{fulton} repeatedly, for a non-negative integer $N$, 
we have 
$$
\theta^N= 
q_*(\xi_0^{d-1} \xi_1^{d-2}\cdots \xi_{d-2}q^*\theta^N)
,
$$
where 
$q$ is the composition of  the projections, 
$\G{d-1}\to \cdots \to \G{1} \to G$. 
It follows from 
\eqref{equation:theta} and the commutativity 
$p=\pi \circ q$ via the identification $\G{d-1} = \X{d}$ 
that 
{\allowdisplaybreaks %
\begin{align*}
\pi_*(\theta^N)
=&
\pi_* q_*( \xi_0^{d-1} \xi_1^{d-2}\cdots \xi_{d-2}q^*\theta^N)
\\ =& 
 \pi_*
q_*
\Big(
\prod_{i=0}^{d-1} \xi_i^{d-1-i}
\Big(
\sum_{i=0}^{d-1} \xi_i 
\Big)^N 
\Big)
= 
p_*\Big(
\prod_{i=0}^{d-1} \xi_i^{d-1-i}
\Big(
\sum_{i=0}^{d-1} \xi_i 
\Big)^N 
\Big)
, 
\end{align*}
}%
where 
$p$ is the composition of  the projections, 
$\X{d}\to \cdots \to \X{1} \to X$. 
Now, apply Theorem \ref{theorem:general_push_forward_formula} with 
$F:= 
\prod_{i=0}^{d-1} T_i^{d-1-i}
\Big(
\sum_{i=0}^{d-1} T_i 
\Big)^N $. 
Then, 
$$
p_*
\Big(
\prod_{i=0}^{d-1} \xi_i^{d-1-i}
\Big(
\sum_{i=0}^{d-1} \xi_i 
\Big)^N 
\Big)
=
\const_{\underline t} 
\Big(
\Delta(\underline t)
\prod_{i=0}^{d-1}
t_i^{r -d-(d-1-i)}
\Big( 
\sum_{i=0}^{d-1}
t_i^{-1} 
\Big) ^N 
\prod_{i=0}^{d-1}
s(\mathcal E, t_i)
\Big)
.
$$
Thus the conclusion follows with $\ch(\det \mathcal Q) = \exp (\theta)$. 
\end{proof}

\section{A linear form on the Laurent polynomial ring}

\begin{definition}\label{definition:linear_form}
Let $A$ be a $\mathbb Q$-algebra.  
We define a linear form 
$\Phi:A[\{t_i,\frac{1}{t_i}\}_{0 \le i \le d-1}]
\to A$ on the Laurent polynomial ring
$A[\{t_i,\frac{1}{t_i}\}_{0 \le i \le d-1}]$ 
by
\begin{equation*}
\Phi(f):=
\const_{\underline{t}}
\Big(
\Delta(\underline{t})
\exp\Big(
\sum_{i=0}^{d-1}
\frac{1}{t_i} 
\Big)
f (\underline t) \Big) 
\qquad 
\Big(f \in A\Big[\Big\{t_i,\frac{1}{t_i} \Big\}_{0 \le i \le d-1} \Big]\Big), 
\end{equation*}
where $\underline t:= (t_0, \dots,t_{d-1})$. 
\end{definition}

\begin{lemma}\label{lemma:constant_part_linear_form}
\begin{enumerate}
\item
\label{lemma:constant_part_linear_form_1}
Consider the natural action of 
the permutation group 
$\mathfrak S_d$ 
on 
%
\linebreak 
%
$A[\{t_i,\frac{1}{t_i}\}_{0 \le i \le d-1}]$
with $\sigma(t_i):= t_{\sigma(i)}\; (\sigma \in \mathfrak S_d)$. 
Then we have $\Phi(\sigma(f))=\sgn(\sigma)\Phi(f)$.
As a consequence, we have 
$$
\Phi\Big(\prod_{i=0}^{d-1}t_i^{-(d-1-i)}f(\underline{t})\Big)
=(-1)^{d(d-1)/2}\Phi\Big(\prod_{i=0}^{d-1}t_i^{-i}f(\underline{t})\Big)
$$
for a symmetric function $f(\underline{t})$.
\item
\label{lemma:constant_part_linear_form_3}
For a Schur polynomial $s_{\lambda}(\underline{t})$ and 
a symmetric function $f(\underline{t})$, we have 
$$
\Phi\Big(\prod_{i=0}^{d-1}t_i^{-i}f(\underline{t})s_{\lambda}(\underline{t})\Big)
=
\Phi\Big(\prod_{i=0}^{d-1} 
t_i^{-i+\lambda_{i+1}} 
f(\underline{t})
\Big)
. 
$$
\end{enumerate}
\end{lemma}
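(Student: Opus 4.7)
My approach is to combine three facts: the constant-term functional $\const_{\underline{t}}$ is $\mathfrak{S}_d$-invariant (it reads off the coefficient of the monomial $1$, a permutation-invariant condition), the factor $\exp\bigl(\sum_{i=0}^{d-1}1/t_i\bigr)$ is $\mathfrak{S}_d$-symmetric, and the Vandermonde $\Delta(\underline{t})$ satisfies $\sigma(\Delta)=\sgn(\sigma)\Delta$. Applying $\sigma^{-1}$ inside the constant term then yields
$\Phi(\sigma f)=\const_{\underline{t}}\bigl(\Delta\cdot\exp(\textstyle\sum 1/t_i)\cdot\sigma f\bigr)=\const_{\underline{t}}\bigl(\sigma^{-1}(\Delta)\cdot\exp(\textstyle\sum 1/t_i)\cdot f\bigr)=\sgn(\sigma)\Phi(f)$.
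For the stated consequence, I take $\sigma$ to be the reversal $i\mapsto d-1-i$, whose sign is $(-1)^{d(d-1)/2}$ and which sends $\prod_i t_i^{-i}$ to $\prod_i t_i^{-(d-1-i)}$ while fixing the symmetric factor $f(\underline{t})$.

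\textbf{Part (2).} The strategy is to anti-symmetrize both sides and to recognize a Jacobi bialternant. Using part (1), for any symmetric $g$ and any Laurent polynomial $h$ one has
$d!\,\Phi(g\cdot h)=\Phi\bigl(g\cdot\sum_{\sigma\in\mathfrak{S}_d}\sgn(\sigma)\,\sigma(h)\bigr)$.
I would apply this to the left-hand side of (2) with $g=f\cdot s_\lambda$ and $h=\prod_i t_i^{-i}$, and to the right-hand side with $g=f$ and $h=\prod_i t_i^{-i+\lambda_{i+1}}$. Each anti-symmetrization is a determinant in $\underline{t}$: after scaling each row by $t_j^{d-1}$ (which factors $\prod_j t_j^{-(d-1)}$ outside), the first becomes $\det[t_j^{d-1-k}]=\Delta(\underline{t})$, and the second becomes the bialternant $\det[t_j^{\lambda_{k+1}+d-1-k}]=a_{\lambda+\delta}(\underline{t})$ for $\delta=(d-1,\dots,0)$. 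By the classical bialternant identity $a_{\lambda+\delta}=s_\lambda\cdot a_\delta=s_\lambda\cdot\Delta$, both sides of (2) multiplied by $d!$ reduce to the common value $\Phi\bigl(f\cdot s_\lambda\cdot\Delta\cdot\prod_i t_i^{-(d-1)}\bigr)$, and (2) follows upon dividing by $d!$.

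The step I expect to be most delicate is the index bookkeeping: translating between the partition indexing $\lambda_{i+1}$ (running from $1$ to $d$) and the variable indexing $t_i$ (running from $0$ to $d-1$), and pinning down the sign convention so that $a_\delta=\Delta(\underline{t})$ rather than its negative. Once these conventions are fixed, both determinant identifications amount to a single row-scaling followed by invocation of the bialternant formula, so no further computation is needed.
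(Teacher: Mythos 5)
Your proposal is correct and takes essentially the same route as the paper: part (1) from the symmetry of the constant-term functional and the alternating property of $\Delta(\underline{t})$, and part (2) by using part (1) to anti-symmetrize and then identifying $\det[t_j^{\lambda_{k+1}+d-1-k}]=s_{\lambda}(\underline{t})\Delta(\underline{t})$, which is precisely the paper's definition of the Schur polynomial. The only (cosmetic) difference is organizational: you anti-symmetrize both sides to the common value $\frac{1}{d!}\Phi\bigl(f\,s_{\lambda}\,\Delta(\underline{t})\prod_{i=0}^{d-1}t_i^{-(d-1)}\bigr)$, whereas the paper runs the identical identities as a single chain from the left-hand side to the right-hand side, folding in the bialternant and then re-expanding the determinant.
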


Here the {\it Schur polynomial} $s_{\lambda}(\underline t)$
in $\underline t=(t_0 , \dots, t_{d-1})$  
for a partition 
${\lambda} = (\lambda_1, \dots , \lambda_d)$ 
is the polynomial 
defined by 
$$
s_{\lambda}(\underline t)
:= 
\frac{\det[t_j^{\lambda_{i}+d-i}]
}
{\det [ t_{j} ^{d-i} ]
} 
= 
\frac{\det[t_j^{\lambda_{i}+d-i}]
}
{\Delta(\underline t) } 
, 
$$
where $1 \le i\le d$, $0 \le j \le d-1$
(see, {\it e.g.}, \cite[14.5 and A.9]{fulton}, \cite[Chapter I, \S3]{macdonald}).

\begin{proof}
\eqref{lemma:constant_part_linear_form_1}.  
The assertion 
is a direct consequence from the definition of $\Phi$ and a property of
 $\Delta(\underline{t})$.

\eqref{lemma:constant_part_linear_form_3}. 
Using \eqref{lemma:constant_part_linear_form_1}, we have 
{\allowdisplaybreaks %
\begin{align*} 
\Phi
\Big(
\prod_{i=0}^{d-1}
t_i^{-i}f(\underline{t})s_{\lambda}(\underline{t})
\Big)
&=
\frac{1}{d!} 
\Phi
\Big(
\prod_{i=0}^{d-1}t_i^{-(d-1)}
f(\underline{t})s_{\lambda}(\underline{t})
\sum_{\sigma \in \mathfrak{S}_d} 
\sgn(\sigma)
\prod_{i=0}^{d-1}t_{\sigma(i)}^{d-1-i}
\Big) 
\\
&=
\frac{1}{d!} 
\Phi
\Big(
\prod_{i=0}^{d-1}t_i^{-(d-1)}
f(\underline{t})s_{\lambda}(\underline{t})
\Delta(\underline{t})
\Big) 
\\
&=
\frac{1}{d!} 
\Phi
\Big(
\prod_{i=0}^{d-1}t_i^{-(d-1)}
f(\underline{t})
\det[t_j^{\lambda_l+d-l}]_{1 \le l \le d, 0 \le j \le d-1}
\Big) 
\\
&=
\frac{1}{d!} 
\sum_{\sigma \in \mathfrak{S}_d} 
\sgn(\sigma)
\Phi
\Big(
\prod_{i=0}^{d-1}
t_{\sigma(i)}^{-i+\lambda_{i+1}}
f(\underline{t})
\Big) 
= 
\Phi
\Big(
\prod_{i=0}^{d-1}t_i^{-i+\lambda_{i+1}} 
f(\underline{t})
\Big) 
.
\qedhere 
\end{align*} 
}%
\end{proof}

To simplify the notation, 
for a finite set of integers 
$\{ a_i  \}_{0 \le i \le d-1}$, 
set 
$$ 
\pr{a_i} 
:= \prod_{0 \le i \le d-1} a_i ! ,
\quad 
\Delta(a_i) := \prod_{0 \le i< j \le d-1}(a_i - a_j) 
.
$$ 
Setting $m! := \Gamma(m+1)$ for $m \in \mathbb Z$, 
we have $1/m! = 0$ if $m < 0$.

\begin{proposition}[Evaluation Formula] 
\label{prop:evaluation_formula}
For $k = (k_0 , \dots, k_{d-1}) \in \mathbb Z_{\ge 0}^d$,  
we have
$$
\Phi\Big(\prod_{i=0}^{d-1}t_i^{k_i}\Big)
=
\frac{(-1)^{d(d-1)/2} \Delta(k_i)}
{\{k_i+d-1\}!}.
$$
\end{proposition}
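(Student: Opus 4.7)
The plan is to compute $\Phi\bigl(\prod_i t_i^{k_i}\bigr)$ directly from its definition by expanding both the Vandermonde and the exponential as signed power series, extracting the constant term, and then recognizing the result as a Vandermonde-type determinant.

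First, I would write $\Delta(\underline t) = \sum_{\sigma \in \mathfrak{S}_d}\sgn(\sigma)\prod_{i=0}^{d-1} t_i^{d-1-\sigma(i)}$ and $\exp\bigl(\sum_{i} 1/t_i\bigr)=\prod_{i=0}^{d-1}\sum_{m_i\ge 0} t_i^{-m_i}/m_i!$, where $\mathfrak{S}_d$ is viewed as the symmetric group on $\{0,\dots,d-1\}$. Multiplying through by $\prod_i t_i^{k_i}$ yields monomials $\sgn(\sigma)\prod_i t_i^{k_i+d-1-\sigma(i)-m_i}/m_i!$, and the constant term in $\underline t$ is isolated by the single choice $m_i = k_i + d-1 - \sigma(i)$. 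Because $k_i\ge 0$ and $\sigma(i)\le d-1$, the integer $m_i$ is automatically nonnegative, so every $\sigma$ contributes a genuine term. Setting $a_i := k_i + d - 1$, this collapses to
\begin{equation*}
\Phi\Big(\prod_{i=0}^{d-1} t_i^{k_i}\Big)
= \sum_{\sigma \in \mathfrak{S}_d} \sgn(\sigma) \prod_{i=0}^{d-1}\frac{1}{(a_i - \sigma(i))!}
= \det\Big[\frac{1}{(a_i - j)!}\Big]_{0\le i,j\le d-1}.
\end{equation*}

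Next, I would pull $1/a_i!$ out of the $i$-th row; the remaining $(i,j)$-entry is the falling factorial $a_i(a_i-1)\cdots(a_i-j+1)$, a monic polynomial in $a_i$ of degree $j$. Elementary column operations replacing each such polynomial by its leading monomial preserve the determinant and reduce it to the classical Vandermonde $\det[a_i^j]_{0 \le i,j \le d-1}= \prod_{i<j}(a_j - a_i) = (-1)^{d(d-1)/2}\Delta(a_i)$. Since $a_i - a_j = k_i - k_j$ we have $\Delta(a_i) = \Delta(k_i)$, and reinstating the factor $\prod_i 1/a_i! = 1/\pr{k_i+d-1}$ produces exactly the claimed formula.

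The argument is essentially a bookkeeping exercise, so no serious obstacle arises. The only points that demand care are verifying that $m_i$ is automatically nonnegative (so that no contribution is silently killed by the convention $1/m! = 0$ for $m<0$) and correctly tracking the sign $(-1)^{d(d-1)/2}$ coming from the identification $\det[a_i^j]_{0 \le i,j \le d-1} = \prod_{i<j}(a_j - a_i)$.
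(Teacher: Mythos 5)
Your proof is correct and follows essentially the same route as the paper: expand $\Delta(\underline t)$ as a signed sum over $\mathfrak S_d$ and $\exp\big(\sum_i 1/t_i\big)$ as a product of series, extract the constant term (noting every $\sigma$ contributes since $k_i+d-1-\sigma(i)\ge 0$), and arrive at $\det\big[1/(k_i+d-1-j)!\big]_{0\le i,j\le d-1}$. The only difference is at the final step, where the paper cites the determinant identity $\det\big[1/(x_i+j)!\big]=\Delta(x_i)/\pr{x_i+d-1}$ (Lemma \ref{lemma:det}, from \cite{fulton}), while you prove it inline by pulling out $1/(k_i+d-1)!$ from each row and reducing the falling factorials to the Vandermonde $\det[a_i^j]$ by column operations --- a self-contained variant that changes nothing essential.
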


\begin{proof}
We have  
{\allowdisplaybreaks %
\begin{align*}
\Phi\Big(\prod_{i=0}^{d-1}t_i^{k_i}\Big)
&=
\const_{\underline{t}}
\Big(
\sum_{\sigma\in \mathfrak S_d}
\sgn(\sigma)\prod_{i=0}^{d-1}\Big(t_i^{k_i+d-1-\sigma(i)}
\exp
\Big(\frac{1}{t_i}\Big)
\Big)
\Big)
\\
&=
\sum_{\sigma\in \mathfrak S_d}
\sgn(\sigma)\prod_{i=0}^{d-1} 
\const_{{t_i}}
\Big(t_i^{k_i+d-1-\sigma(i)}
\exp
\Big(\frac{1}{t_i}\Big)
\Big)
\\
&=
\sum_{\sigma\in \mathfrak S_d}
\frac{\sgn(\sigma)}
{\{k_i+d-1-\sigma(i)\}!}
=
\det
\begin{bmatrix} 
\dfrac{1}
{(k_i+d-1-j)!}
\end{bmatrix} _{0 \le i,j \le d-1}
\\
&=
\frac{(-1)^{d(d-1)/2} \Delta(k_i)}
{\{k_i+d-1\}!}.
\end{align*}
}%
The last equality follows from the lemma below.
\end{proof}

\begin{lemma}
[{\cite[Example A.9.3]{fulton}}]
\label{lemma:det}
$$
\det 
\begin{bmatrix} 
\dfrac{1}{(x_i + j)! } 
\end{bmatrix} _{0 \le i,j \le d-1}
=
\frac{\Delta(x_i)}{\pr{x_i + d-1} }
.
$$
\end{lemma}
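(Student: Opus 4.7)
The plan is to reduce the matrix to a Vandermonde matrix by row scaling followed by column operations. First I would clear denominators: using the identity
$$
\frac{1}{(x_i+j)!}=\frac{1}{(x_i+d-1)!}\prod_{k=j+1}^{d-1}(x_i+k),
$$
one can factor $1/(x_i+d-1)!$ out of row $i$, giving
$$
\det\Bigl[\frac{1}{(x_i+j)!}\Bigr]_{0\le i,j\le d-1}
=\frac{1}{\pr{x_i+d-1}}\,\det\Bigl[\prod_{k=j+1}^{d-1}(x_i+k)\Bigr]_{0\le i,j\le d-1}.
$$
The new $(i,j)$-entry is a monic polynomial in $x_i$ of degree $d-1-j$, with the convention that the empty product appearing in column $j=d-1$ equals $1$.

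Next I would perform column operations from right to left. Column $d-1$ is already the constant polynomial $1$; column $d-2$ is $x_i+\text{const}$, whose constant term can be killed by subtracting a multiple of column $d-1$; column $d-3$ is $x_i^{2}+(\text{lower order})$, whose lower-order terms can be killed using the already-reduced columns $d-1$ and $d-2$; and so on inductively. These operations leave the determinant unchanged and reduce the matrix to $[\,x_i^{d-1-j}\,]_{0\le i,j\le d-1}$. Reversing the column order via the permutation $j\mapsto d-1-j$ (of sign $(-1)^{d(d-1)/2}$) then yields
$$
\det\bigl[x_i^{d-1-j}\bigr]_{0\le i,j\le d-1}
=(-1)^{d(d-1)/2}\det\bigl[x_i^{j}\bigr]_{0\le i,j\le d-1}.
$$

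Finally, the standard Vandermonde identity gives $\det[x_i^j]_{0\le i,j\le d-1}=\prod_{0\le i<j\le d-1}(x_j-x_i)$, which equals $(-1)^{d(d-1)/2}\Delta(x_i)$ under the sign convention $\Delta(x_i):=\prod_{0\le i<j\le d-1}(x_i-x_j)$ used throughout the paper. The two factors of $(-1)^{d(d-1)/2}$ cancel, leaving $\det[x_i^{d-1-j}]=\Delta(x_i)$; combining with the row-scaling factor from the first step yields the claim. No serious obstacle arises here; the only thing to watch is the cancellation of the two sign contributions, which is precisely what allows the final formula to display the unsigned $\Delta(x_i)$ in the numerator.
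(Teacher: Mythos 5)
Your proof is correct. Note that the paper itself gives no argument for this lemma: it is stated as a quotation from Fulton's \emph{Intersection Theory} (Example A.9.3), so your write-up supplies the proof the paper omits, and it is essentially the standard computation behind that cited example. Each step checks out: the factorization $1/(x_i+j)!=\bigl(\prod_{k=j+1}^{d-1}(x_i+k)\bigr)/(x_i+d-1)!$ lets you pull $1/(x_i+d-1)!$ out of row $i$; the resulting $(i,j)$-entry is monic of degree $d-1-j$ in $x_i$ with coefficients depending only on $j$, so the right-to-left column reduction uses only constant multiples of columns and preserves the determinant, yielding $\det[x_i^{d-1-j}]$. Your sign bookkeeping is also right, though you could shortcut it: $\det[x_i^{d-1-j}]$ is already the Vandermonde determinant in decreasing powers, equal directly to $\prod_{0\le i<j\le d-1}(x_i-x_j)=\Delta(x_i)$ under the paper's convention, with no column reversal and hence no signs to cancel. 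One refinement worth recording: the paper sets $m!:=\Gamma(m+1)$ with $1/m!=0$ for $m<0$, and your row factorization remains a valid identity in that generality --- if $x_i+d-1<0$ the entire row vanishes on both sides, while if $x_i+j<0\le x_i+d-1$ the factor $x_i+k$ with $k=-x_i$ occurs in $\prod_{k=j+1}^{d-1}(x_i+k)$ and kills the right-hand side as well --- so your argument proves the lemma for all integers $x_i$, which is the form the paper actually uses.
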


\begin{proof}[Proof of Theorem \ref{theorem:main_theorem}]
By Proposition \ref{prop:Laurent_series} 
and  
Lemma \ref{lemma:constant_part_linear_form}
\eqref{lemma:constant_part_linear_form_1} 
with $A:= A^{*}(X)\otimes \mathbb Q$, 
we have 
{\setlength{\multlinegap}{36pt}
\begin{multline}
\label{last computation}
\pi_*
\ch(\det \mathcal Q) 
=
\Phi
\Big(
\prod_{i=0}^{d-1}
t_i^{-(d-1-i)}
\prod_{i=0}^{d-1} \big( t_i^{r -d}s(\mathcal E, t_i) \big)
\Big)
\\=
(-1)^{d(d-1)/2}
\Phi
\Big(
\prod_{i=0}^{d-1}
t_i^{-i}
\prod_{i=0}^{d-1}\big( t_i^{r-d}s(\mathcal E, t_i) \big)
\Big)
.
\end{multline}
}%
Since 
$$
\prod_{i=0}^{d-1}s(\mathcal E, t_i)
=
\sum_{k}
\prod_{i=0}^{d-1} 
s_{k_i}(\mathcal E) 
t_i^{k_i}
, 
$$
it follows from 
Proposition 
\ref{prop:evaluation_formula} 
that 
the most right-hand side of (\ref{last computation}) is equal to
$$
(-1)^{d(d-1)/2}
\sum_k 
\Phi
\Big(
\prod_{i=0}^{d-1}
t_i^{r -d+k_i-i}
\prod_{i=0}^{d-1} 
s_{k_i}(\mathcal E)
\Big)
=
\sum_k 
\frac{\Delta(k_{i}-i)}{
\{r +k_{i}-i-1\}!}
\prod_{i=0}^{d-1}s_{k_i}(\mathcal E)  ,
$$
where 
$k = (k_0, \dots , k_{d-1}) \in \mathbb Z_{\ge 0}^d$. 
Thus we obtain the conclusion. 
\end{proof}

\begin{proof}[Proof of Corollary \ref{corollary:degree_formula}]
By the assumption  
$\grassmann$  
is projective and 
the tautological line bundle $\mathcal O_{\mathbb P_X(\wedge^{d} \mathcal E)}(1)$ 
defines 
an embedding 
$\mathbb P_X(\wedge^{d} \mathcal E) \hookrightarrow 
\mathbb P(H^0(X, \wedge^{d} \mathcal E))$. 
Therefore 
$\grassmann$
is considered to be a projective variety 
in $\mathbb P(H^0(X, \wedge^{d} \mathcal E))$ 
via 
the relative Pl\"ucker embedding 
$\grassmann \hookrightarrow \mathbb P_X(\wedge^{d} \mathcal E)$
over $X$ 
defined by the quotient 
$\wedge^d\pi^*\mathcal E \to \wedge^d \mathcal Q=\det \mathcal Q$. 
Since the hyperplane section class of $\grassmann$ is equal to the Pl\"ucker class $\theta$, 
we obtain the conclusion, 
taking the degree of the equality 
in Theorem \ref{theorem:main_theorem}. 
\end{proof}

\begin{proof}[Proof of Theorem \ref{theorem:another_formula}]
By Lemmas 
\ref{lemma:Cauchy_formula} below, 
\ref{lemma:constant_part_linear_form} \eqref{lemma:constant_part_linear_form_3} 
and Proposition \ref{prop:evaluation_formula}, 
the most right-hand side of (\ref{last computation}) is equal to
{\allowdisplaybreaks %
\begin{align*}
(-1)^{d(d-1)/2}
&
\sum_{\lambda}
\Phi
\Big(
\prod_{i=0}^{d-1}t_i^{r-d-i}
s_{\lambda}(\underline{t}) 
\Big)
\varDelta_{\lambda}(s(\mathcal E))
\\
&=
(-1)^{d(d-1)/2}
\sum_{\lambda}
\Phi
\Big(
\prod_{i=0}^{d-1}t_i^{r-d-i+\lambda_{i+1}}
\Big)
\varDelta_{\lambda}(s(\mathcal E))
\\
&=
 \sum_{\lambda}
\frac{\Delta(r-d-i+\lambda_{i+1})}
{\{  r-d-i+\lambda_{i+1} + (d-1)  \}!}
\varDelta_{\lambda}(s(\mathcal E))
\\
&=
\sum_{\lambda}
\frac{\Delta(\lambda_{i+1}-(i+1))}{
\{\lambda_{i+1}+r-(i+1)\}!}
\varDelta_{\lambda}(s(\mathcal E)) 
=
\sum_{\lambda}
\frac{f^{\lambda + \varepsilon}}{\vert \lambda + \varepsilon \vert !}
\varDelta_{\lambda}(s(\mathcal E))
. 
\qedhere
\end{align*}
}%
\end{proof}

\begin{lemma}
\label{lemma:Cauchy_formula}
$$
\prod_{i=0}^{d-1} s(\mathcal E,t_i)
=
\sum_{\lambda}
\varDelta_{\lambda}(s(\mathcal E))
s_{\lambda}(\underline{t}) 
. 
$$
\end{lemma}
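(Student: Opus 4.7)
The plan is to prove this as a combination of the classical Cauchy identity for Schur polynomials and the Jacobi--Trudi formula, pulled back from symmetric function theory to the Chow ring via the splitting principle.

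First, I would invoke the splitting principle: since both sides are universal polynomials in the Chern classes of $\mathcal E$ with values in $A^*(X)[[t_0,\dots,t_{d-1}]]$, it suffices to establish the identity when $\mathcal E$ formally splits into line bundles with Chern roots $y_1,\dots,y_r$. Under this assumption, $c(\mathcal E,t)=\prod_{\alpha}(1+y_{\alpha}t)$, and the defining relation $s(\mathcal E,t)\,c(\mathcal E,-t)=1$ immediately gives
\[
s(\mathcal E,t)\;=\;\prod_{\alpha=1}^{r}\frac{1}{1-y_{\alpha}t}.
\]
Hence the left-hand side of the lemma becomes the double product $\prod_{i,\alpha}(1-y_{\alpha}t_i)^{-1}$.

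Second, I would apply the classical Cauchy identity for Schur functions in two sets of variables $y=(y_1,\dots,y_r)$ and $\underline t=(t_0,\dots,t_{d-1})$,
\[
\prod_{i=0}^{d-1}\prod_{\alpha=1}^{r}\frac{1}{1-y_{\alpha}t_i}\;=\;\sum_{\lambda}s_{\lambda}(y)\,s_{\lambda}(\underline t),
\]
where the sum runs over all partitions $\lambda$ (automatically with at most $d$ parts, since $s_\lambda(\underline t)=0$ when $\ell(\lambda)>d$). This is a standard symmetric function identity (Macdonald, Chapter I, \S4).

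Finally, to match the factor $s_\lambda(y)$ with $\varDelta_\lambda(s(\mathcal E))$, I would invoke the Jacobi--Trudi formula $s_\lambda(y)=\det[h_{\lambda_i+j-i}(y)]_{1\le i,j\le d}$, where $h_k(y)$ is the complete homogeneous symmetric polynomial characterized by $\sum_{k\ge 0}h_k(y)t^k=\prod_{\alpha}(1-y_\alpha t)^{-1}$. Comparing with the formula for $s(\mathcal E,t)$ above yields $h_k(y)=s_k(\mathcal E)$, so $s_\lambda(y)=\det[s_{\lambda_i+j-i}(\mathcal E)]=\varDelta_\lambda(s(\mathcal E))$, which completes the proof.

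There is no substantive obstacle here: the proof is a direct translation of two well-known symmetric function identities. The only point that requires a sentence of care is the justification of the splitting principle, which is routine since both sides live in a polynomial extension of $A^*(X)$ and both are manifestly polynomial in the Chern (equivalently, Segre) classes of $\mathcal E$.
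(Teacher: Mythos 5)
Your proof is correct and takes essentially the same route as the paper's: the paper likewise writes $s(\mathcal E,t_i)=1/c(\mathcal E,-t_i)=\prod_{j}(1-\alpha_j t_i)^{-1}$ in terms of Chern roots, applies the classical Cauchy identity to get $\sum_{\lambda}s_{\lambda}(\underline\alpha)s_{\lambda}(\underline t)$, and identifies $s_{\lambda}(\underline\alpha)=\varDelta_{\lambda}(s(\mathcal E))$ via the Jacobi--Trudi identity. Your additional sentence justifying the splitting principle only makes explicit a step the paper leaves implicit.
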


\begin{proof}
Using Cauchy identity \cite[Chapter I, (4.3)]{macdonald}
and 
Jacobi-Trudi identity 
\cite[Lemma A.9.3]{fulton}, we have
$$
\prod_{i=0}^{d-1} s(\mathcal E,t_i)=
\prod_{i=0}^{d-1}\frac{1}{c(\mathcal E,-t_i)}
=\prod_{i=0}^{d-1}\prod_{j=1}^r
\frac{1}{1-\alpha_jt_i} 
=
\sum_{\lambda}s_{\lambda}(\underline{\alpha})s_{\lambda}(\underline{t}) 
=
\sum_{\lambda}
\varDelta_{\lambda}(s(\mathcal E))
s_{\lambda}(\underline{t}) 
,
$$
where 
$\underline \alpha = \{ \alpha_1, \dots , \alpha_{r} \}$ 
are the Chern roots of the vector bundle $\mathcal E$. 
\end{proof}

\section{Appendix: A generalization of {Cauchy Determinant Formula}}

Consider a polynomial ring 
$R_1:=A[\xi_0, \dots, \xi_{r-1}]$ with 
$r$ variables over a $\mathbb Q$-algebra $A$. 
Denote by $c''_i$ 
the $i$-th elementary symmetric polynomial in $\xi_d,\dots, \xi_{r-1}$, 
and by $c_i$
the $i$-th elementary symmetric polynomial in $\xi_0,\dots, \xi_{r-1}$. 
We define the Segre series $s(t)$ by
$$
s(t):=\frac{1}{\prod_{i=0}^{r-1}(1-\xi_it)}.
$$
Set 
$R_2:=A[\xi_0, \dots, \xi_{d-1},c''_1, \dots, c''_{r-d}]$, 
and 
$R_3:=A[c_1, \dots, c_{r}]$.
Then, 
$R_1 \supset R_2 \supset R_3$, and 
$R_1$ (resp. $R_2)$ is a free $R_3$-modules generated by 
$\{\xi_0^{i_0}\cdots \xi_{r-1}^{i_{r-1}}\}$ 
(resp. $\{\xi_0^{i_0}\cdots \xi_{d-1}^{i_{d-1}}\}$), 
where $0\leq i_l \leq r-l-1$ 
(see, {\it e.g.}, \cite[Chapitre 4, \S6]{bourbaki}, \cite[\S\S2--3]{laksov}). 
In particular, we have a decomposition, 
\begin{equation}\label{equation:cohomology_ring_genral_setting}
R_2 
= 
\bigoplus_{\substack{0\le i_l \le {r} -l-1\\(0\le l \le {d} -1)}}
R_3 
\cdot 
\xi_0^{i_0} 
\xi_1^{i_1} \cdots 
\xi_{{d}-1}^{i_{{d} -1}} . 
\end{equation}
For $\alpha \in R_2$, we denote by 
$\coeff_{\underline{\xi}}(\alpha)$
the coefficient of $\alpha$ in 
$\xi_0^{r-1}\cdots \xi_{d-1}^{r-d}$
with respect to the decomposition 
\eqref{equation:cohomology_ring_genral_setting}.

Let ${\mathcal A}$ (resp. ${\mathcal A}'$, ${\mathcal A}''$) 
be the anti-symmetrizer for variables $\{ \xi_0, \dots,$ $\xi_{r-1} \}$
(resp. $\{ \xi_0,\dots, \xi_{d-1} \}$, 
$\{ \xi_{d} , \dots,$ $\xi_{r-1} \}$), 
that is, 
${\mathcal A}(\alpha):=\sum_{\sigma \in \mathfrak S_r} \sgn(\sigma)\sigma(\alpha)$ 
$(\alpha \in R_1)$, for instance.

\begin{proposition}[Generalization of {Cauchy Determinant Formula}]
\label{prop:generalization_of_Cauchy_identity}
We have an equality
$$
{\mathcal A}
\Big(
\frac{\Delta(\xi_0, \dots,\xi_{d-1})\Delta(\xi_d,\dots,\xi_{r-1})}
{\prod_{0\leq i,j\leq d-1}(\tau_j-\xi_i)}
\Big)
=
\frac{\Delta(\xi_0, \dots,\xi_{r-1})}
{\prod_{0\leq i\leq r-1,0\leq j\leq d-1}(\tau_j-\xi_i)}
.
$$
By setting $\tau_i:=\dfrac{1}{t_i}$, we have
$$
{\mathcal A}
\Big(
\frac{\Delta(\xi_0,\dots, \xi_{d-1})
\cdot \Delta(\xi_d, \dots,\xi_{r-1})}{\prod_{0\leq i,j \leq d-1}(1-\xi_it_j)}
\Big)=\frac{\Delta(\xi_0,\dots, \xi_{r-1})\prod_{i=0}^{d-1}t_i^{r-d}}
{\prod_{0\leq i \leq r-1,
0 \leq j\leq d-1}(1-\xi_it_j)}
.
$$
\end{proposition}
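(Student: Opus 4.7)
My plan is to reduce both forms of the proposition to a single polynomial identity and prove the latter by induction on $d$.

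The rational function $f$ inside $\mathcal A$ is antisymmetric in $\{\xi_0, \dots, \xi_{d-1}\}$ (since $\Delta(\xi_0, \dots, \xi_{d-1})$ is antisymmetric while the denominator $\prod_{0 \le i, j \le d-1}(\tau_j - \xi_i)$ is symmetric in these variables) and analogously antisymmetric in $\{\xi_d, \dots, \xi_{r-1}\}$. Consequently $\mathcal A(f)$ collapses to a sum over $(d, r-d)$-shuffles $\sigma_I$ indexed by $d$-subsets $I \subset \{0, \dots, r-1\}$, with each shuffled summand equal to $\sgn(\sigma_I)\Delta(\xi_I)\Delta(\xi_{I^c})/\prod_{i \in I}\prod_{j=0}^{d-1}(\tau_j - \xi_i)$. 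Clearing the common denominator $\prod_{i=0}^{r-1}\prod_{j=0}^{d-1}(\tau_j - \xi_i)$ then reduces the first form of the proposition to the polynomial identity
\[
P^{(d)}(\tau; \xi) \;:=\; \sum_{|I|=d} \sgn(\sigma_I)\,\Delta(\xi_I)\Delta(\xi_{I^c}) \prod_{i \in I^c} \prod_{j=0}^{d-1}(\tau_j - \xi_i) \;\stackrel{?}{=}\; \Delta(\xi_0, \dots, \xi_{r-1}),
\]
where $P^{(d)}$ is manifestly symmetric in the $\tau_j$'s and of degree at most $r - d$ in each.

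I would prove $P^{(d)} \equiv \Delta(\xi_0, \dots, \xi_{r-1})$ by induction on $d$. The base $d = 0$ is trivial. For the inductive step, substituting $\tau_{d-1} = \xi_k$ for any $k \in \{0, \dots, r-1\}$ kills every summand with $k \notin I$ (the factor $\prod_{i \in I^c}(\xi_k - \xi_i)$ vanishes), leaving only $I = I' \sqcup \{k\}$ with $|I'| = d - 1$. Applying the fundamental Vandermonde factorization $\Delta_r = \sgn(\sigma_I)\,\Delta(\xi_I)\Delta(\xi_{I^c})\prod_{i \in I,\,i' \in I^c}(\xi_i - \xi_{i'})$ both for $\{0, \dots, r-1\}$ and for $\{0, \dots, r-1\} \setminus \{k\}$, each summand rewrites so that the sum over $I'$ is precisely $\tfrac{\Delta_r}{\Delta(\xi_{\hat k})}\,P^{(d-1)}(\tau_0, \dots, \tau_{d-2}; \xi_0, \dots, \widehat{\xi_k}, \dots, \xi_{r-1})$, which by the inductive hypothesis equals $\tfrac{\Delta_r}{\Delta(\xi_{\hat k})} \cdot \Delta(\xi_{\hat k}) = \Delta_r$. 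Since $P^{(d)}$ is a polynomial of degree $\le r - d < r$ in $\tau_{d-1}$ agreeing with the constant $\Delta_r$ at the $r$ distinct points $\xi_0, \dots, \xi_{r-1}$, it is identically $\Delta_r$ in $\tau_{d-1}$; by the symmetry of $P^{(d)}$ in the $\tau_j$'s it is identically $\Delta_r$.

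The second formulation of the proposition (with $t_j = 1/\tau_j$) follows directly from the first via the substitution $\tau_j - \xi_i = (1 - \xi_i t_j)/t_j$: the $\xi$-independent powers of the $t_j$'s pull out of $\mathcal A$ and cancel on both sides to produce the factor $\prod_{i=0}^{d-1} t_i^{r-d}$ on the right. The main technical obstacle is the sign-bookkeeping in the inductive step---specifically, relating the shuffle sign $\sgn(\sigma_I)$ for $I \subset \{0, \dots, r-1\}$ to $\sgn(\sigma_{I'})$ for $I' \subset \{0, \dots, r-1\} \setminus \{k\}$, and controlling the auxiliary signs $(-1)^{\#\{i \in I':\, i < k\}}$ arising when one factors out the $\xi_k$-containing terms of $\Delta_r$---all of which follow cleanly by applying the Vandermonde factorization to the nested index sets.
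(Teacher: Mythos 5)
Your proof is correct, but it takes a genuinely different route from the paper's. The paper's own argument is a one-shot divisibility-and-degree count: it multiplies $\mathcal A(f)$ by the full denominator $\prod_{0\leq i\leq r-1,\,0\leq j\leq d-1}(\tau_j-\xi_i)$, observes that the result is a homogeneous polynomial of degree $d(d-1)/2+(r-d)(r-d-1)/2-d^2+rd=r(r-1)/2$ which is antisymmetric in $\xi_0,\dots,\xi_{r-1}$, hence a constant multiple of $\Delta_r:=\Delta(\xi_0,\dots,\xi_{r-1})$, and then pins the constant down by comparing the coefficient of $\xi_0^{r-1}\cdots\xi_{r-1}^{0}$. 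You instead make the coset structure explicit: you collapse $\mathcal A$ to a sum over $(d,r-d)$-shuffles, clear denominators to the polynomial identity $P^{(d)}=\Delta_r$, and prove that by induction on $d$ via the evaluations $\tau_{d-1}=\xi_k$, nested Vandermonde factorizations, and interpolation (degree $\leq r-d<r$ in $\tau_{d-1}$ with agreement at the $r$ points $\xi_0,\dots,\xi_{r-1}$, which are distinct in the fraction field). I checked the inductive matching you flagged as the main obstacle: writing $I=I'\sqcup\{k\}$ and using the two factorizations $\Delta_r=\sgn(\sigma_I)\Delta(\xi_I)\Delta(\xi_{I^c})\prod_{i\in I,\,i'\in I^c}(\xi_i-\xi_{i'})$ and $\Delta(\xi_{\hat k})=\sgn(\sigma_{I'})\Delta(\xi_{I'})\Delta(\xi_{I^c})\prod_{i\in I',\,i'\in I^c}(\xi_i-\xi_{i'})$, each surviving summand at $\tau_{d-1}=\xi_k$ equals the corresponding summand of $\frac{\Delta_r}{\Delta(\xi_{\hat k})}P^{(d-1)}$ with all auxiliary signs cancelling, so the step does close. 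The paper's route is shorter; yours is more self-contained, replacing the fact that an antisymmetric polynomial is divisible by the Vandermonde (and the implicit check that the cleared expression is a polynomial at all) by explicit interpolation, and it isolates the shuffle-sum identity, which is the form actually consistent with the rest of the appendix. One caveat you share with the paper: with the literal definition $\mathcal A(\alpha)=\sum_{\sigma\in\mathfrak S_r}\sgn(\sigma)\sigma(\alpha)$, a function antisymmetric in each block satisfies $\mathcal A(f)=d!\,(r-d)!\sum_{I}\sgn(\sigma_I)\sigma_I(f)$, so your assertion that $\mathcal A(f)$ \emph{equals} the shuffle sum, and indeed the proposition as stated, hold only if $\mathcal A$ is interpreted as the sum over the $\binom{r}{d}$ coset representatives (already for $r=3$, $d=1$ the full sum gives twice the right-hand side); your argument proves exactly the shuffle-sum version, which is the correct and intended statement, so this normalization slip is inherited from the paper rather than introduced by you.
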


\begin{proof}
The fractional expression, 
$$
{\mathcal A}
\Big(
\frac{\Delta(\xi_0, \dots,\xi_{d-1})\Delta(\xi_d,  \dots,\xi_{r-1})}
{\prod_{0\leq i,j\leq d-1}(\tau_j-\xi_i)}
\Big)
\prod_{0\leq i\leq r-1,0\leq j \leq d-1}(\tau_j-\xi_i)
$$
is actually 
a homogeneous polynomial 
in the variables,  
$\xi_0, \dots, \xi_{r-1}$, 
$\tau_0, \dots, \tau_{r-1}$, with degree 
${d(d-1)}/{2}+{(r-d)(r-d-1)}/{2}-d^2+rd={r(r-1)}/{2}$, 
and anti-symmetric 
with respect to the $\xi_i$. 
Therefore it is a multiple of $\Delta(\xi_0, \dots, \xi_{r-1})$.
By comparing the coefficient of $\xi_0^{r-1}\cdots \xi_{r-1}^0$,
we see that those polynomials are equal to each other, and 
we obtain the first equality. 
The second equality follows from the first one.
 \end{proof}

\begin{proof}[Another Proof of Lemma \ref{lemma:monomial}] 
Let $G(\underline t)$ be the generating function of 
$\coeff_{\underline{\xi}}
(\xi_0^{p_0}\cdots \xi_{d-1}^{p_{d-1}}),
$ 
that is, 
$$
G(\underline t) 
:= \sum_{p_0, \dots, p_{d-1}\geq 0}
\coeff_{\underline{\xi}}(\xi_{0}^{p_{0}} \cdots \xi_{d-1}^{p_{d-1}} )
t_0^{p_0} \cdots t_{d-1}^{p_{d-1}} 
.
$$ 
For $0\leq i_l \leq r-l-1$, we have
$$
{\mathcal A}(\xi_0^{i_0}\cdots \xi_{r-1}^{i_{r-1}})=
\begin{cases}
\Delta(\xi_0, \dots, \xi_{r-1}) , & (i_0,\dots, i_{r-1})= (r-1, \dots, 0) , 
\\
0 , & (i_0,\dots, i_{r-1})\neq (r-1, \dots, 0). \\
\end{cases}
$$
Since ${\mathcal A}$ is $R_3$-linear, we have an equality，
$$
{\mathcal A}(\alpha\cdot\xi_d^{r-d-1}\cdots \xi_{r-1}^0)
=
\coeff_{\underline{\xi}}
(\alpha)
\Delta(\xi_0,\dots, \xi_{r-1})
$$
in $R_1$ for $\alpha\in R_2$.
Therefore,  
{\allowdisplaybreaks %
\begin{align*}
\begin{split}
{\Delta(\xi_0,\dots, \xi_{r-1})}
{G(\underline t)}
&=
\sum_{p_0, \dots, p_{d-1}\geq 0}{\mathcal A}(\xi_0^{p_0},\dots, \xi_{d-1}^{p_{d-1}}
\cdot\xi_d^{r-d-1}\cdots \xi_{r-1}^0)\ t_0^{p_0}\cdots t_{d-1}^{p_{d-1}} \\
&=
{\mathcal A}
\Big(
\frac{\xi_d^{r-d-1}\cdots \xi_{r-1}^0}
{(1-\xi_0t_0)\cdots (1-\xi_{d-1}t_{d-1})}
\Big)
\\
&=
{\mathcal A}
\Big(
{\mathcal A}'
\Big(
\frac{1}{(1-\xi_0t_0)\cdots (1-\xi_{d-1}t_{d-1})}
\Big)
{\mathcal A}''(\xi_d^{r-d-1}\cdots \xi_{r-1}^0)
\Big)
\\
&=
{\mathcal A}
\Big(
\frac{
\Delta(t_0,\dots, t_{d-1})
\Delta(\xi_0,\dots, \xi_{d-1})
\Delta(\xi_d, \dots,\xi_{r-1})}{\prod_{0\leq i,j \leq d-1}(1-\xi_it_j)}
\Big) 
\\
&=
{\mathcal A}
\Big(
\frac{
\Delta(\xi_0,\dots, \xi_{d-1})
\Delta(\xi_d, \dots,\xi_{r-1})}{\prod_{0\leq i,j \leq d-1}(1-\xi_it_j)}
\Big)
\Delta(t_0,\dots, t_{d-1})
. 
\end{split}
\end{align*}
}%
Here we used the equality, 
$$
{\mathcal A}(f(\xi_0, \dots, \xi_{d-1})
g(\xi_d,\dots, \xi_{r-1}))
=
{\mathcal A}({\mathcal A}'(f(\xi_0, \dots, \xi_{d-1}))
{\mathcal A}''(g(\xi_d,\dots, \xi_{r-1})))
$$
and Cauchy determinant formula (\cite[p.67, I.4, Example 6]{macdonald}). 
Finally, using Proposition \ref{prop:generalization_of_Cauchy_identity}, 
we see that 
$$
{G(\underline t)}
=
\frac{\Delta(t_0,\dots, t_{d-1})\prod_{i=0}^{d-1}t_i^{r-d}}
{\prod_{0\leq i \leq r-1,
0 \leq j\leq d-1}(1-\xi_it_j)} 
=
\Delta(t_0,\dots, t_{d-1})\prod_{i=0}^{d-1}t_i^{r-d} s(t_i)
,
$$
and this proves Lemma \ref{lemma:monomial} with $R_{1}:= A^{*}(X)$ and 
$R_{2}:= A^{*}(\X{d})= A^{*}(\G{d-1})$. 
\end{proof} 

\smallskip 

\noindent%
{\it Acknowlgments.} 
The authors thank Professor Hiroshi Naruse and 
Professor Takeshi Ikeda, too, for useful discussion and kind advice. 
The first author is supported by JSPS KAKENHI Grant Number 25400053. 
The second author is supported by JSPS KAKENHI Grant Number 
15H02048.

\end{document}